\NeedsTeXFormat{LaTeX2e}
\documentclass[a4paper,12pt]{amsart}
\usepackage{enumerate}
\usepackage{amsmath,amssymb,latexsym,amsxtra,amscd,amsthm}
\usepackage[mathscr]{eucal}
\usepackage{latexsym}
\usepackage{amsfonts}
\usepackage{dsfont}
\usepackage{enumerate}
\usepackage{pstricks}
\usepackage{pst-node}
\usepackage{pst-plot}
\usepackage{pst-all}
\footskip=10mm \pagestyle{plain}
\numberwithin{equation}{section}

\newtheorem{theorem}{Theorem}[section]
\newtheorem{lemma}[theorem]{Lemma}
\newtheorem{definition}[theorem]{Definition}
\newtheorem{example}[theorem]{Example}
\newtheorem{remark}[theorem]{Remark}

\newtheorem{proposition}[theorem]{Proposition}

\newcommand{\hull}[1]{\text{\rm{conv}}(#1)}
\newcommand{\lh}{\mathcal L(H)}
\newcommand{\set}[2]{\ensuremath{\{ #1: #2\}}}


\begin{document}

\title{Equilateral weights on the unit ball of $\mathds R^n$}
\author{Emmanuel Chetcuti \and Joseph Muscat}
\address{E. Chetcuti\\
Department of Mathematics\\
Faculty of Science\\
University of Malta\\
Msida MSD 2080, Malta}
\email{emanuel.chetcuti@um.edu.mt}
\address{J. Muscat\\
Department of Mathematics\\
Faculty of Science\\
University of Malta\\
Msida MSD 2080, Malta}
\email{joseph.muscat@um.edu.mt}
\date{\today}
\subjclass[2010]{51M04,39B55}
\maketitle

\begin{abstract}
An equilateral set (or regular simplex) in a metric space $X$, is a set $A$ such that the distance between any pair of distinct members of $A$ is a constant.  An equilateral set is standard if the distance between distinct members is equal to $1$.  Motivated by the notion of frame-functions, as introduced and characterized by Gleason in \cite{Gl}, we define an equilateral weight on a metric space $X$ to be a function $f:X\longrightarrow \mathds R$ such that $\sum_{i\in I}f(x_i)=W$,  for every maximal standard equilateral set $\{x_i:i\in I\}$ in $X$, where $W\in\mathds R$ is the weight of $f$.  In this paper we  characterize the equilateral weights associated with the unit ball $B^n$  of $\mathds R^n$ as follows: For $n\ge 2$, every equilateral weight on $B^n$ is  constant.
\end{abstract}

\section{Introduction}
Equilateral sets have been extensively studied in the literature for a number of metric spaces \cite{Bl}.  An equilateral set (or regular simplex) in a metric space $X$, is a set $A$ so that the distance between any pair of distinct members of $A$ is $\rho$, where $\rho\neq 0$ is a constant. The equilateral dimension of $X$  is defined to be $\sup\set{|A|}{ A \text{ is an equilateral set in }X}$.

Suppose that $\{x_1,\dots,x_k\}$ is an equilateral set in $\mathds R^n$  (equipped with the $\ell_2$-norm).  Then the vectors $v_i:=x_{i+1}-x_1$ for $i=1,\dots,k-1$ are linearly independent.  Indeed,  let $A$ be the $(k-1)\times (k-1)$ matrix $(a_{ij})$ defined by $a_{ij}:=\langle v_i,v_j\rangle$.  Then $a_{ij}=\frac{\rho^2}{2}(1+\delta_{ij})$ where $\rho\neq 0$ is a constant and $\delta_{ij}$ is the Kronecker delta.  Let $\{e_1,\dots,e_{n}\}$ be the canonical basis of $\mathds R^{n}$ and let $B$ be the $n\times(k-1)$ matrix $(b_{ij})$ defined by $b_{ij}:=\langle v_j,e_i\rangle$.  Since $A=B^\ast B$ and $A$ is clearly non-singular, we deduce that $B$ is non-singular, i.e. the vectors  $v_i:=x_{i+1}-x_1$ for $i=1,\dots,k-1$ are linearly independent and therefore $k\le n+1$.  To see that the equilateral dimension of $\mathds R^n$  (equipped with the $\ell_2$-norm) is $n+1$ observe that the set   $\{x_1-c,\dots,x_k-c\}$ where  $c:=\frac{1}{k}\sum_{i=1}^k x_i$  has linear dimension $k-1$ and so if $k<n+1$,  there exists a unit vector $u\in\mathds R^n$ such that $u\,\bot\, x_i-c$ for each $i=1,\dots,k$, and therefore the set $\{x_1,\dots,x_k\}$ can be enlarged to a bigger equilateral set in $\mathds R^n$. 
Let us only mention here  that the situation is far more complicated for the other $\ell_p$-norms \cite{Pe,KoLaSc,AlPu} (and others).

An equilateral set in $\mathds R^n$ is \emph{standard} if the distance between distinct points is equal to $1$.  If $\{x_1,\dots,x_k\}$ is a standard equilateral set in $\mathds R^n$, its centre  $\frac{1}{k}\sum_{i=1}^k x_i$ will be denoted by $c(x_1,\dots,x_k)$.  The \emph{radius} of $\{x_1,\dots,x_k\}$ is $\bigl\Vert x_i-c(x_1,\dots,x_k)\bigr\Vert$ and is denoted by $\beta_k$.  A simple calculation yields 
\begin{align*}
\beta_k=\Bigl\Vert x_i-c(x_1,\dots,x_k)\Bigr\Vert=&\frac{1}{k}\Bigl\Vert \sum_{\substack{1\le j\le k\\j\neq i}}(x_j-x_i)\Bigr\Vert\\
=&\frac{1}{k}\sqrt{k-1+\frac{(k-1)(k-2)}{2}}=\sqrt{\frac{k-1}{2k}}.
\end{align*}
If $x_{k+1}$ is another point in $\mathds R^n$ such that $\{x_1,\dots,x_k,x_{k+1}\}$ is again a standard equilateral set, then $x_{k+1}-c(x_1,\dots,x_k)$ is orthogonal to $x_{i}-c(x_1,\dots,x_k)$ for every $i=1,\dots,k$, and thus
\[\Bigl\Vert x_{k+1}-c(x_1,\dots,x_k)\Bigr\Vert=\sqrt{1-\beta_k^2}=\sqrt{\frac{k+1}{2k}}.\]
We will call $\alpha_{k+1}:=\sqrt{\frac{k+1}{2k}}$ the \emph{perpendicular height} of $\{x_1,\dots,x_k,x_{k+1}\}$.

We shall now introduce the notion of equilateral weights.  The motivation behind this definition is the notion of frame functions.  These were introduced and characterized by Gleason \cite{Gl} in his famous theorem describing the measures on the closed subspaces of a Hilbert space.   Gleason's Theorem is of utmost importance in the laying down of the foundations of quantum mechanics \cite{Varadarajan,Pt-Pu,Gudderbook,Dv,Ha} (and others).   Let $S(0,1)$ denote the unit sphere of a Hilbert space $H$.  A function $f:S(0,1)\to\mathds R$ is called a frame function on $H$ if there is a number $w(f)$, called the weight of $f$, such that $\sum_{i\in I}f(u_i)=w(f)$ for every orthonormal basis $\{u_i:i\in I\}$ of $H$.  We recall that a bounded operator $T$ on $H$ is of trace-class if the series $\sum_{i\in I}\langle Tu_i,u_i\rangle$ converges absolutely for any orthonormal  basis $\set{u_i}{i\in I}$ of $H$.  (It is well-known that if the series converges for an orthonormal basis $\set{u_i}{i\in I}$ then it converges for any orthonormal basis and the sum does not depend on the choice of the basis.)  Clearly, if $T$ is self-adjoint and of trace-class the function $f_T(x)=\langle Tx,x\rangle$  $(x\in S(0,1))$ defines a continuous frame function on $H$.  Gleason's Theorem says that  when $\dim H\ge 3$ every bounded frame function arises in this way.  The heart of the proof of Gleason's Theorem is the treatment of the case when $H$ is the real three-dimensional Hilbert space $\mathds R^3$.  In fact all the other cases can be reduced to this case.  Thus, as a matter of fact, it can be said that the crux of this theorem can be rendered to the following statement:
\emph{
For every bounded frame function $f$ on $\mathds R^3$ there exists a symmetric matrix $T$ on $\mathds R^3$ such that $f(u)=\langle Tu,u\rangle$ for every unit vector $u\in\mathds R^3$. } The notion of frame functions and the fact that an orthonormal basis of $\mathds R^3$ is simply a maximal  equilateral set on the unit sphere of $\mathds R^3$, suggest the following definition:

\begin{definition}
Let $X$ be a metric space and let $W\in\mathds R$.  An \emph{equilateral weight on $X$ with weight $W$} is a function $f:X\longrightarrow \mathds R$ such that
\[\sum_{i\in I}f(x_i)=W\]
whenever $\set{x_i}{i\in I}$ is a maximal standard equilateral set in $X$.
\end{definition}

Given a metric space, can one describe the equilateral weights associated with it?

\begin{example} Every equilateral weight on $\mathds R^2$ is constant.  First observe that for every pair of points $x$ and $y$ in $\mathds R^2$ there are points $x_1$, $x_2$, $\dots$, $x_n$ in $\mathds R^2$ such that $\Vert x_1-x\Vert=\Vert x_{i+1}-x_i\Vert=\Vert y-x_n\Vert =1$ for every $i=1,\dots,n-1$.  Thus, it suffices to to show that $f(x)=f(y)$ for all $x,y\in \mathds R^2$ satisfying $\Vert x-y\Vert=1$.  Let $x,y\in\mathds R^2$ such that $\Vert x-y\Vert=1$.  Observe that if $\{a,b,c\}$ and $\{d,b,c\}$ are the vertices of two unit equilateral triangles and $f$ is an equilateral weight, then $f(a)=f(d)$.  Thus,  $f$ takes the constant value $f(x)$ on the circle with centre $x$ and radius $\sqrt{3}$, and the constant value $f(y)$ on the circle with centre $y$ and radius $\sqrt{3}$.  Since these circles intersect, it follows that $f(x)=f(y)$.  Using a similar argument but replacing $\sqrt{3}$ with $2\alpha_{n+1}$, one can easily show that every equilateral weight on $\mathds R^n$ is constant.  The same cannot be said for $\mathds R$ -- it is easy to find non-trivial equilateral weights on $\mathds R$.
\end{example}

\begin{example}\label{Gleason} Let $S$ be the sphere in a Hilbert space $H$ with centre $0$ and radius $1/\sqrt 2$.  Two vectors $u$ and $v$ in $S$ satisfy $\Vert u-v\Vert=1$ if, and only if, $\langle u,v\rangle=0$. Thus, each maximal standard equilateral set in $S$ corresponds to a rescaling of some orthonormal basis of $H$ by a factor of $1/\sqrt 2$.  It is clear therefore that the equilateral weights on $S$ correspond to the frame-functions on $H$ (composite with a rescaling by a factor of $\sqrt 2$).  Thus, in view of  Gleason's Theorem  if $\dim H\ge 3$ and $f$ is a bounded equilateral weight on $S$, there exists a self-adjoint, trace-class operator $T$ such that
\[f(u)=\langle Tu,u\rangle\]
for all $u\in S$. Let us emphasize that such a description does not hold when $\dim H=2$ and that the assumption of boundedness is not redundant when $\dim H$ is finite.  It known that $\mathds R^n$ admits frame functions that are unbounded and that therefore cannot be described by such an equation (see \cite[Proposition 3.2.4]{Dv}).
\end{example}

By contrast, the boundedness assumption is superfluous when the space is infinite dimensional.  This surprising result is due to Dorofeev and Sherstnev \cite{Dorofeev-Sherstnev} and allows us to describe the equilateral weights associated with the metric space $S$ of an infinite dimensional Hilbert space directly from Gleason's Theorem.

\begin{proposition}
Let $H$ be an infinite dimensional Hilbert space and let $S$ be the sphere in $H$ with centre $0$ and radius $1/\sqrt 2$.  If $f$ is an equilateral weight on $S$, then there exists a self-adjoint, trace-class operator $T$ on $H$ such that $f(u)=\langle Tu,u\rangle$ for every vector $u$ in $S$.
\end{proposition}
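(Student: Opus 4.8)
The plan is to reduce the statement to Example~\ref{Gleason} by showing that, in the infinite-dimensional setting, the boundedness hypothesis appearing there is automatically satisfied. The argument thus has two parts: first, recast the problem in terms of frame functions on $H$; second, invoke the theorem of Dorofeev and Sherstnev to obtain boundedness, after which Example~\ref{Gleason} finishes the job.

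For the first part, recall from Example~\ref{Gleason} that for $u,v\in S$ one has $\Vert u-v\Vert=1$ if and only if $\langle u,v\rangle=0$; hence the maximal standard equilateral subsets of $S$ are precisely the sets $\set{u_i/\sqrt2}{i\in I}$ obtained by rescaling an orthonormal basis $\set{u_i}{i\in I}$ of $H$ by the factor $1/\sqrt2$. Consequently $g(u):=f(u/\sqrt2)$ defines a frame function on $H$ with $w(g)=W$, and the assignment $f\mapsto g$ is a bijection from the equilateral weights on $S$ onto the frame functions on $H$ which manifestly preserves boundedness (indeed $\sup_{S(0,1)}|g|=\sup_S|f|$). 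So it is enough to show that the frame function $g$ is bounded: then $f$ is bounded, and since $\dim H\ge 3$, Example~\ref{Gleason} supplies the required self-adjoint trace-class operator $T$ with $f(u)=\langle Tu,u\rangle$ for all $u\in S$.

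The single substantial ingredient --- and the step I expect to be the main obstacle --- is exactly this boundedness, which is the content of the Dorofeev--Sherstnev theorem \cite{Dorofeev-Sherstnev}: every frame function on an infinite-dimensional Hilbert space is bounded. This is a genuinely infinite-dimensional phenomenon rather than a formality, since by \cite[Proposition~3.2.4]{Dv} unbounded frame functions do exist on $\mathds R^n$; any proof must therefore exploit the abundance of orthonormal bases available in infinite dimensions, for instance by using an assumed unboundedness of $g$ on finite-dimensional subspaces to construct an orthonormal basis of $H$ along which $\sum_i g(u_i)$ diverges, contradicting the frame-function property. For the present purposes I would cite their result directly; a fully self-contained proof would have to reproduce (a form of) their argument, and that would be the bulk of the work. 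Once $g$ is known to be bounded, everything else is immediate from Example~\ref{Gleason}, which itself rests on Gleason's Theorem \cite{Gl} together with the elementary remark that a bounded self-adjoint operator $T$ for which $\sum_{i\in I}\langle Tu_i,u_i\rangle$ converges over every orthonormal basis is necessarily trace-class --- as one sees by writing $T=T_+-T_-$ along the spectral projection of $T$ onto $[0,\infty)$ and choosing orthonormal bases adapted to that decomposition.
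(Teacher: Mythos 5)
Your proposal is correct and follows exactly the route the paper intends: the paper gives no separate proof of this proposition precisely because, as it notes just before the statement, the correspondence of Example \ref{Gleason} between equilateral weights on $S$ and frame functions on $H$, combined with the Dorofeev--Sherstnev theorem that frame functions on infinite-dimensional Hilbert spaces are automatically bounded, reduces everything to Gleason's Theorem. Your additional remarks on why boundedness is the genuinely infinite-dimensional ingredient are accurate and consistent with the paper's discussion.
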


The aim of the present paper is to describe the equilateral weights associated with another bounded metric space; namely the unit ball of $\mathds R^n$.

\section{Standard equilateral sets in the unit ball of $\mathds R^n$}

In what follows  we will be interested in standard equilateral sets contained in the (closed) unit ball of $\mathds R^n$, denoted by $B^n$.  It is clear that  the equilateral dimension of $B^n$ is equal to that of $\mathds R^n$.  We start by exhibiting some properties of standard equilateral sets in $B^n$.

\begin{proposition}\label{alpha} Let $\{x_1,\dots,x_k\}$ $(k\le n+1)$ be a standard equilateral set in $B^n$.  Then $\Vert c(x_1,\dots,x_k)\Vert\le \alpha_{k+1}$.
\end{proposition}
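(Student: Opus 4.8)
The plan is to exploit the averaging identity satisfied by the centroid together with the fixed value of the circumradius $\beta_k$ computed in the introduction. Write $c=c(x_1,\dots,x_k)$ for brevity and split each vertex as $x_i=c+(x_i-c)$. Expanding the squared norm gives
\[
\|x_i\|^2=\|c\|^2+2\langle c,x_i-c\rangle+\|x_i-c\|^2,
\]
and I would sum this identity over $i=1,\dots,k$.

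Summing is worthwhile because two of the three terms collapse for structural reasons. The cross term vanishes: $\sum_{i=1}^k(x_i-c)=\sum_{i=1}^k x_i-kc=0$ by the definition of the centroid, hence $\sum_{i=1}^k\langle c,x_i-c\rangle=0$. The last term is constant: by the computation recalled above, $\|x_i-c\|=\beta_k=\sqrt{(k-1)/2k}$ for every $i$, so $\sum_{i=1}^k\|x_i-c\|^2=k\beta_k^2$. Therefore $\sum_{i=1}^k\|x_i\|^2=k\|c\|^2+k\beta_k^2$. Now I invoke the hypothesis that the set lies in $B^n$, so each $\|x_i\|\le 1$ and the left-hand side is at most $k$; this gives $\|c\|^2\le 1-\beta_k^2$. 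Finally $1-\beta_k^2=1-\frac{k-1}{2k}=\frac{k+1}{2k}=\alpha_{k+1}^2$, which is exactly the relation between circumradius and perpendicular height noted in the introduction, so taking square roots yields $\|c\|\le\alpha_{k+1}$.

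I do not expect a genuine obstacle: the entire argument is the parallel-axis identity $\frac1k\sum_i\|x_i\|^2=\|c\|^2+\frac1k\sum_i\|x_i-c\|^2$ combined with the two facts that $\|x_i\|\le 1$ and that the circumradius is forced to equal $\beta_k$. The only point to keep in mind is that containment in $B^n$ is used solely through $\|x_i\|\le 1$, and that the equality $\|c\|=\alpha_{k+1}$ holds precisely when every $x_i$ is a unit vector; this remark will presumably be relevant when the bound is applied later.
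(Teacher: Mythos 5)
Your proof is correct and is essentially the paper's argument in a different arrangement: the paper expands $\Vert c\Vert^2=k^{-2}\bigl\langle\sum_i x_i,\sum_i x_i\bigr\rangle$ and bounds each cross term via $2\langle x_i,x_j\rangle=\Vert x_i\Vert^2+\Vert x_j\Vert^2-1\le 1$, which is exactly the polarization information your parallel-axis identity packages into the constant $\beta_k$. Your rearrangement has the minor advantage of staying an exact identity until the single use of $\Vert x_i\Vert\le 1$, so the equality case ($\Vert c\Vert=\alpha_{k+1}$ precisely when every $x_i$ is a unit vector) comes for free.
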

\begin{proof}
First observe that
\[2\langle x_i,x_j\rangle=\Vert x_i\Vert^2+\Vert x_j\Vert^2-\Vert x_i-x_j\Vert^2\le 1,\]
and therefore
\begin{align*}
\Vert c(x_1,\dots,x_k)\Vert^2\ =\ &k^{-2}\biggl\langle\sum_{\substack{i=1}}^{k}x_i,\sum_{\substack{i=1}}^{k}x_i\biggr\rangle\\
=\ &k^{-2}\biggl [\sum_{\substack{i=1}}^k\Vert x_i\Vert^2+\sum_{\substack{1\le i,\,j\le k\\ i\ne j}}\langle x_i,x_j\rangle\biggr ]\\
\le\ &k^{-2}\biggl [k+\frac{k(k-1)}{2}\biggr]\\
=\ &\alpha_{k+1}^2.
\end{align*}
\end{proof}

In the extremal case $k=n+1$ the bound obtained in Proposition \ref{alpha} can be improved as shown in the next Proposition.  This improvement is needed to prove Proposition \ref{enlargement}.  We first prove a lemma.

\begin{lemma}\label{lemma}Let $\{x_1,x_2,\dots,x_{n+1}\}$ be a maximal standard equilateral set in $\mathds R^n$ with centre at the origin and let $x\in\mathds R^n$ satisfy $\langle x,x_i\rangle\ge 0$ for $i=2,3,\dots,n+1$.  If $\Vert x\Vert \ge 1$, then $\langle x,x_2+x_3+\cdots+x_{n+1}\rangle\ge 1/2$.
\end{lemma}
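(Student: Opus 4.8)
The plan is to use the special geometry of a maximal standard equilateral set (a regular simplex) centred at the origin. Write $v := x_2 + x_3 + \cdots + x_{n+1}$, so that $v = (n+1)\,c(x_1,\dots,x_{n+1}) - x_1 = -x_1$ since the centre is the origin; hence $v = -x_1$ and in particular $\Vert v\Vert = \Vert x_1\Vert = \beta_{n+1} = \sqrt{\tfrac{n}{2(n+1)}}$. So the quantity to be bounded below is simply $\langle x, -x_1\rangle = -\langle x, x_1\rangle$, and the claim becomes: if $\langle x, x_i\rangle \ge 0$ for $i = 2,\dots,n+1$ and $\Vert x\Vert \ge 1$, then $\langle x, x_1\rangle \le -\tfrac12$.

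**The key computation.** First I would record the pairwise inner products of the simplex vertices. Since the $x_i$ are unit-distance apart, centred at $0$, and all have norm $\beta_{n+1}$, expanding $\Vert x_i - x_j\Vert^2 = 1$ gives $\langle x_i, x_j\rangle = \beta_{n+1}^2 - \tfrac12 = \tfrac{n}{2(n+1)} - \tfrac12 = -\tfrac{1}{2(n+1)}$ for $i \ne j$. Also $\sum_{i=1}^{n+1} x_i = 0$. Now decompose $x$ in the basis-like frame given by the simplex. Since $\{x_2 - x_1, \dots, x_{n+1} - x_1\}$ (equivalently, any $n$ of the $x_i$, or the $x_i$ themselves modulo the relation $\sum x_i = 0$) spans $\mathds R^n$, I can write $x = \sum_{i=1}^{n+1} \lambda_i x_i$; the $\lambda_i$ are determined only up to adding a constant, so normalise by $\sum_i \lambda_i = 0$. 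Then $\langle x, x_j\rangle = \sum_i \lambda_i \langle x_i, x_j\rangle = \lambda_j \beta_{n+1}^2 + \big(\sum_{i \ne j}\lambda_i\big)\big(-\tfrac{1}{2(n+1)}\big) = \lambda_j\big(\beta_{n+1}^2 + \tfrac{1}{2(n+1)}\big) = \tfrac{\lambda_j}{2}$, using $\sum_{i\ne j}\lambda_i = -\lambda_j$. So $\langle x, x_j\rangle = \lambda_j/2$ for every $j$: the hypothesis $\langle x, x_i\rangle \ge 0$ for $i \ge 2$ says $\lambda_2,\dots,\lambda_{n+1} \ge 0$, and the target $\langle x, x_1\rangle \le -\tfrac12$ says $\lambda_1 \le -1$. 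Since $\lambda_1 = -(\lambda_2 + \cdots + \lambda_{n+1})$, it suffices to show $\lambda_2 + \cdots + \lambda_{n+1} \ge 1$.

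**Closing the argument via $\Vert x\Vert \ge 1$.** Compute $\Vert x\Vert^2 = \sum_{i,j}\lambda_i\lambda_j\langle x_i,x_j\rangle = \beta_{n+1}^2\sum_i\lambda_i^2 - \tfrac{1}{2(n+1)}\sum_{i\ne j}\lambda_i\lambda_j$. Using $\sum_{i \ne j}\lambda_i\lambda_j = (\sum_i\lambda_i)^2 - \sum_i\lambda_i^2 = -\sum_i\lambda_i^2$ (as $\sum_i\lambda_i = 0$), this collapses to $\Vert x\Vert^2 = \big(\beta_{n+1}^2 + \tfrac{1}{2(n+1)}\big)\sum_i\lambda_i^2 = \tfrac12\sum_{i=1}^{n+1}\lambda_i^2$. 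So $\Vert x\Vert \ge 1$ is equivalent to $\sum_{i=1}^{n+1}\lambda_i^2 \ge 2$, i.e. $\lambda_1^2 + \sum_{i=2}^{n+1}\lambda_i^2 \ge 2$. Writing $s := \sum_{i=2}^{n+1}\lambda_i \ge 0$ (so $\lambda_1 = -s$), and noting $\sum_{i=2}^{n+1}\lambda_i^2 \le s^2$ because the $\lambda_i$ ($i \ge 2$) are nonnegative, we get $2 \le \lambda_1^2 + \sum_{i=2}^{n+1}\lambda_i^2 \le s^2 + s^2 = 2s^2$, hence $s^2 \ge 1$, hence $s \ge 1$. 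That is exactly $\lambda_2 + \cdots + \lambda_{n+1} \ge 1$, which as shown above is equivalent to $\langle x, x_2 + \cdots + x_{n+1}\rangle = s/2 \ge \tfrac12$.

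**Main obstacle.** The only subtle point is legitimising the expansion $x = \sum_i \lambda_i x_i$ with the normalisation $\sum_i \lambda_i = 0$: one must check that the $x_i$ span $\mathds R^n$ (true, since removing the centre leaves $n$ linearly independent differences, as established in the Introduction) and that imposing $\sum\lambda_i = 0$ picks out a unique representative in the one-dimensional family $\lambda \mapsto \lambda + t(1,\dots,1)$ coming from the single relation $\sum_i x_i = 0$. Everything after that is the bilinear bookkeeping above, where the recurring miracle is that $\beta_{n+1}^2 + \tfrac{1}{2(n+1)} = \tfrac12$, which is what makes both $\langle x, x_j\rangle = \lambda_j/2$ and $\Vert x\Vert^2 = \tfrac12\sum\lambda_i^2$ come out cleanly. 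I would double-check the inequality $\sum_{i \ge 2}\lambda_i^2 \le (\sum_{i\ge 2}\lambda_i)^2$ is used in the correct direction — it needs nonnegativity of the summands, which is precisely the hypothesis $\langle x, x_i\rangle \ge 0$, so the hypothesis is used in an essential way.
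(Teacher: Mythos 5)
Your proof is correct, but it proceeds by a genuinely different route from the paper's. The paper argues by convexity: it introduces the polyhedron $K=\{x:\langle x,v\rangle\le 1/2,\ \langle x,x_i\rangle\ge 0 \text{ for } i\ge 2\}$, identifies its extreme points as $\{0,x_2-x_1,\dots,x_{n+1}-x_1\}$ (all of norm at most $1$, with the nonzero ones lying on the hyperplane $\langle x,v\rangle=1/2$), and uses strict convexity of the norm to conclude that $\Vert x\Vert<1$ whenever the constraints hold with $\langle x,v\rangle<1/2$; the lemma is the contrapositive. You instead expand $x$ in barycentric coordinates $x=\sum_i\lambda_i x_i$ normalised by $\sum_i\lambda_i=0$, and the Gram matrix of the centred simplex turns everything into scalar identities: $\langle x,x_j\rangle=\lambda_j/2$ and $\Vert x\Vert^2=\tfrac12\sum_i\lambda_i^2$, after which the elementary inequality $\sum_{i\ge2}\lambda_i^2\le\bigl(\sum_{i\ge2}\lambda_i\bigr)^2$ for nonnegative $\lambda_i$ finishes the argument. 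All your computations check out ($v=-x_1$, $\langle x_i,x_j\rangle=-\tfrac{1}{2(n+1)}$, the uniqueness of the normalised representation via the one-dimensional kernel spanned by $(1,\dots,1)$, and the correct direction of the final inequality). Your approach costs more bookkeeping but buys self-containedness: it sidesteps the paper's identification of the extreme points of $K$ (which requires examining which $n$ of the $n+1$ constraint hyperplanes have linearly independent normals, and implicitly that $K$ is bounded so the maximum is attained at a vertex), and it makes the equality case transparent, namely $\lambda_1=-1$ with exactly one nonzero $\lambda_i$ among $i\ge 2$.
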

\begin{proof}
Let $v:=x_2+x_3+\cdots+x_{n+1}$ and let
\[K:=\bigl\{x\in\mathds R^n:\langle x,v\rangle\le 1/2,\,\langle x,x_i\rangle\ge 0\text{ for each }i=2,3,\dots,n+1\bigr\}.\]
$K$ is the intersection of half-spaces and therefore a point of $K$ is an extreme point if and only if it is the intersection of $n$ hyperplanes whose normals form a basis of $\mathds R^n$.  Using the fact that $\langle x_i,x_j\rangle$ is independent of $i,j$ (when $i\neq j$)  it is easy to see that the extreme points of $K$ are $\{0,x_2-x_1,x_3-x_{1},\dots,x_{n+1}-x_{1}\}$.  The norm, being a strictly convex function, i.e.
\[\Vert\lambda x+(1-\lambda)y\Vert<\max(\Vert x\Vert,\Vert y\Vert),\quad x\ne y,\,0<\lambda<1\qquad(\star)\]
takes a maximum value at an extremal point and therefore, since $\Vert x_i-x_{1}\Vert=1$ ($i=2,3,\dots,n+1$), it follows that $\Vert x\Vert \le 1$ for every $x\in K$.  From the strict inequality of $(\star)$ and from the fact that each of the vectors $x_i-x_{1}$ ($i=2,3,\dots, n+1$) lies in the hyperplane $\langle x,v\rangle =1/2$, it follows that if $x\in\mathds R^n$ satisfies $\langle x,x_i\rangle\ge 0$ ($i=2,3,\dots,n+1$) and $\langle x,v\rangle<1/2$, then $\Vert x\Vert<1$.
\end{proof}

\begin{proposition}\label{betaj} Let $\{u_1,\dots,u_{n+1}\}$ be a standard equilateral set in $B^n$.  Then $\Vert c(u_1,\dots,u_{n+1})\Vert\le \beta_{n+1}$.

\end{proposition}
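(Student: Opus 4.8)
The plan is to recenter the simplex at the origin and reduce the statement to an elementary one–variable inequality. Set $c:=c(u_1,\dots,u_{n+1})$ and $x_i:=u_i-c$. Then $\{x_1,\dots,x_{n+1}\}$ is a maximal standard equilateral set in $\mathds R^n$ with centroid $0$, so $\sum_i x_i=0$, $\|x_i\|=\beta_{n+1}$ for each $i$, and, since $\|x_i-x_j\|^2=1$, $\langle x_i,x_j\rangle=\beta_{n+1}^2-\tfrac12=-\tfrac1{2(n+1)}$ for $i\ne j$. Writing $a_i:=2\langle c,x_i\rangle$, the condition $u_i\in B^n$ reads $\|c+x_i\|^2=\|c\|^2+\beta_{n+1}^2+a_i\le 1$, that is,
\[
\max_{1\le i\le n+1}a_i\ \le\ 1-\beta_{n+1}^2-\|c\|^2 .
\]
The goal is to turn this into the bound $\|c\|\le\beta_{n+1}$.

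Two ingredients are needed. (i) From $\sum_i x_i=0$ we get $\sum_i a_i=0$, and moreover $\sum_i a_i^2=2\|c\|^2$: the Gram matrix of $x_1,\dots,x_{n+1}$ equals $\tfrac12 I_{n+1}-\tfrac1{2(n+1)}J$, whose only nonzero eigenvalue is $\tfrac12$ (multiplicity $n$), so $\sum_i x_i x_i^{\mathsf T}=\tfrac12 I_n$ and hence $\sum_i\langle c,x_i\rangle^2=\tfrac12\|c\|^2$. (ii) An elementary inequality: if $a_1,\dots,a_{n+1}\in\mathds R$ satisfy $\sum_i a_i=0$, then $\sum_i a_i^2\le n(n+1)\,(\max_i a_i)^2$. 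Indeed, put $m:=\max_i a_i\ (\ge 0)$ and $b_i:=m-a_i\ge 0$; then $\sum_i b_i=(n+1)m$ and $\sum_i a_i^2=\sum_i b_i^2-(n+1)m^2\le\bigl(\sum_i b_i\bigr)^2-(n+1)m^2=n(n+1)m^2$.

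Combining the ingredients with the displayed constraint, and using the identity $\sqrt{2/(n(n+1))}=1/\bigl((n+1)\beta_{n+1}\bigr)$, we obtain, with $r:=\|c\|\ge 0$,
\[
1\ \ge\ \max_i\|c+x_i\|^2\ =\ r^2+\beta_{n+1}^2+\max_i a_i\ \ge\ r^2+\beta_{n+1}^2+\frac{r}{(n+1)\beta_{n+1}} .
\]
Thus $q(r):=r^2+\dfrac{r}{(n+1)\beta_{n+1}}+\bigl(\beta_{n+1}^2-1\bigr)\le 0$. The parabola $q$ opens upwards and $q(0)=\beta_{n+1}^2-1<0$, so it has a unique nonnegative root; and $q(\beta_{n+1})=2\beta_{n+1}^2+\tfrac1{n+1}-1=0$, so that root is exactly $\beta_{n+1}$. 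Hence $r\le\beta_{n+1}$, which is the assertion.

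The only steps requiring a moment's thought are ingredients (i) and (ii); everything else is bookkeeping with $\beta_{n+1}^2=\frac{n}{2(n+1)}$. It is worth noting that equality in (ii) forces all but one of the $a_i$ to equal the maximum, i.e.\ all but one of the $u_i$ to lie on $\partial B^n$ with the remaining one at the centroid — precisely the extremal configuration of Proposition \ref{betaj}. This is the same configuration appearing in Lemma \ref{lemma}, whose extreme points $\{0,\,x_2-x_1,\dots,x_{n+1}-x_1\}$ are exactly those points, so one could also run a purely geometric argument through that lemma; the computation above, however, is self-contained and short.
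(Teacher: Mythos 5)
Your proof is correct, and it takes a genuinely different route from the paper's. The paper translates the simplex so that one vertex sits at the origin, reduces the claim to the inner-product inequality $\bigl\langle -u_1/\Vert u_1\Vert,\,c\bigr\rangle\ge\Vert u_1\Vert/2$, and proves that via Lemma \ref{lemma} --- a convexity argument locating the extreme points of the polytope cut out by the constraints and maximizing the norm there. You instead recenter at the centroid and use two algebraic facts: the tight-frame identity $\sum_i x_ix_i^{\mathsf T}=\tfrac12 I_n$ (read off correctly from the spectrum of the Gram matrix $\tfrac12 I_{n+1}-\tfrac{1}{2(n+1)}J$), which gives $\sum_i a_i^2=2\Vert c\Vert^2$; and the elementary bound $\sum_i a_i^2\le n(n+1)(\max_i a_i)^2$ for zero-sum tuples. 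These convert the membership constraints $u_i\in B^n$ into the quadratic inequality $q(r)\le 0$, and the identity $q(\beta_{n+1})=2\beta_{n+1}^2+\tfrac{1}{n+1}-1=0$ closes the argument. I checked the computations ($\langle x_i,x_j\rangle=-\tfrac{1}{2(n+1)}$ for $i\ne j$, the frame identity, the inequality in your ingredient (ii), and the root of $q$) and they are all right. Your route is shorter and self-contained --- it would render Lemma \ref{lemma} unnecessary --- and it exhibits the equality case transparently; the paper's route is more geometric and makes the extremal configuration visible as a polytope vertex. One small slip in your closing aside: in the extremal configuration the exceptional vertex sits at the centre of the ball (the origin), not at the centroid of the simplex; this remark is not used in the proof and does not affect its validity.
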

\begin{proof}
Let $\{u_1,u_2,\dots,u_{n+1}\}$ be a maximal standard equilateral set in $B^n$.  Then $\{0,u_2-u_1,\dots,u_{n+1}-u_1\}$ is again a maximal standard equilateral set in $B^n$.  Let us denote its centre by $c$.  Note that $\Vert c\Vert=\beta_{n+1}$.  For each $i=1,2,\dots,n+1$, let $x_i:=u_i-u_1-c$.  Then $\{x_1,x_2,\dots,x_{n+1}\}$ is a maximal standard equilateral set with centre at the origin.  Note that
\[c(u_1,u_2,\dots,u_{n+1})=c(x_1,x_2,\dots,x_{n+1})+u_1+c=u_1+c.\]
Thus
\begin{equation*}
\Vert c(u_1,u_2,\dots,u_{n+1})\Vert ^2=\Vert u_1+c\Vert ^2=\Vert u_1\Vert^2+\Vert c\Vert^2+2\langle u_1,c\rangle,
\end{equation*}
and therefore for the proposition to hold we require
\[\biggl\langle \frac{-u_1}{\Vert u_1\Vert}\,,c\biggr\rangle\ge\frac{\Vert u_1\Vert}{2}.\qquad(\star)\]
To this end we calculate
\begin{align*}
1\ge\Vert u_i\Vert^2=&\Vert x_i+c\Vert^2+\Vert u_1\Vert^2+2\langle u_1,x_i+c\rangle\\
=&1+\Vert u_1\Vert^2+2\langle u_1,x_i\rangle+2\langle u_1,c\rangle
\end{align*}
 which implies
\[\biggl\langle \frac{-u_1}{\Vert u_1\Vert}\,,x_i\bigg\rangle\ge\frac{\Vert u_1\Vert}{2}-\biggl\langle\frac{-u_1}{\Vert u_1\Vert},c\biggr\rangle\qquad(\star\star).\]
for each $i=2,3,\dots,n+1$.  Now, if the right hand side of $(\star\star)$ is $\le 0$, then $(\star)$ is satisfied.  On the other-hand, if the right hand side of $(\star\star)$ is greater than $0$, then Lemma \ref{lemma} can be applied to conclude
\[\frac{\Vert u_1\Vert}{2}\le\frac{1}{2}\le\bigg\langle \frac{-u_1}{\Vert u_1\Vert},x_2+x_3+\cdots+x_{n+1}\bigg\rangle=\bigg\langle \frac{-u_1}{\Vert u_1\Vert},-x_1\bigg\rangle=\bigg\langle \frac{-u_1}{\Vert u_1\Vert},c\bigg\rangle,\]
which completes the proof.
\end{proof}

\begin{proposition}\label{enlargement}
Every standard equilateral set in $B^n$ can be enlarged to one having size $n+1$ such that its members  all lie in $B^n$.
\end{proposition}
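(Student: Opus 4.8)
The plan is to reduce everything to adjoining one point at a time: I will show that whenever $\{x_1,\dots,x_k\}\subseteq B^n$ is standard equilateral with $k\le n$, some $x_{k+1}\in B^n$ makes $\{x_1,\dots,x_{k+1}\}$ standard equilateral; iterating this until the set reaches size $n+1$ then gives the conclusion. Fix such a set, put $c:=c(x_1,\dots,x_k)$, let $V$ be the linear span of $\{x_i-x_j\}$ (a subspace of $\mathds R^n$ of dimension $k-1\le n-1$), and write $c=a+b$, where $a$ is the orthogonal projection of $c$ onto $V$ and $b$ its projection onto $V^\perp$. As recalled above, a point $x_{k+1}$ completes $\{x_1,\dots,x_k\}$ to a standard equilateral set precisely when $x_{k+1}=c+w$ with $w\in V^\perp$ and $\|w\|=\alpha_{k+1}$ (then $\|x_{k+1}-x_i\|^2=\|c-x_i\|^2+\|w\|^2=\beta_k^2+\alpha_{k+1}^2=1$), and such $w$ exist since $\dim V^\perp=n-k+1\ge 1$. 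Choosing $w$ antiparallel to $b$ (or arbitrary when $b=0$) and using $c+w=a+(b+w)$ with $a\perp V^\perp$ gives
\[\|x_{k+1}\|^2=\|a\|^2+\bigl(\|b\|-\alpha_{k+1}\bigr)^2 ,\]
so the proposition reduces to the inequality $\|a\|^2+(\|b\|-\alpha_{k+1})^2\le 1$.

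The decisive ingredient is the bound $\|a\|\le\beta_k$, and this is precisely where Proposition~\ref{betaj} comes in. The trick is to translate the configuration by $-b$: since $x_i-b=(x_i-c)+a$ lies in $V$ and is orthogonal to $b\in V^\perp$, we have $\|x_i-b\|^2=\|x_i\|^2-\|b\|^2\le 1$, so $\{x_1-b,\dots,x_k-b\}$ is a standard equilateral set of $k$ points contained in the unit ball of the $(k-1)$-dimensional space $V$ --- hence a \emph{maximal} standard equilateral set there. Its centre is $\tfrac1k\sum_i(x_i-b)=c-b=a$, so Proposition~\ref{betaj}, applied inside $V\cong\mathds R^{k-1}$, gives $\|a\|\le\beta_k$. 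Using this together with $\beta_k^2+\alpha_{k+1}^2=1$ (which is the identity $\alpha_{k+1}=\sqrt{1-\beta_k^2}$) and $\|b\|\le\|c\|\le\alpha_{k+1}$ from Proposition~\ref{alpha}, we conclude
\[\|a\|^2+\bigl(\|b\|-\alpha_{k+1}\bigr)^2\le\beta_k^2+\alpha_{k+1}^2+\|b\|^2-2\alpha_{k+1}\|b\|=1-\|b\|\bigl(2\alpha_{k+1}-\|b\|\bigr)\le 1 ,\]
because $0\le\|b\|\le\alpha_{k+1}\le 2\alpha_{k+1}$. This yields the desired $x_{k+1}\in B^n$ and hence, after iteration, the proposition.

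I expect the only real difficulty to be the estimate $\|a\|\le\beta_k$. The weaker bound $\|c\|\le\alpha_{k+1}$ coming from Proposition~\ref{alpha} does \emph{not} suffice --- for instance when $b=0$ it gives merely $\|a\|\le\alpha_{k+1}>\beta_k$, and then $\|a\|^2+\alpha_{k+1}^2$ can exceed $1$ --- which is exactly why the sharpened Proposition~\ref{betaj} is needed. What makes Proposition~\ref{betaj} applicable at all is the recentring $x_i\mapsto x_i-b$, which slides the affine hull of the set onto a linear subspace and so turns it into a genuine maximal standard equilateral set inside a unit ball.
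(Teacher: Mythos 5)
Your proposal is correct and follows essentially the same route as the paper: decompose the centre orthogonally with respect to the span of the differences, apply Proposition~\ref{betaj} in the $(k-1)$-dimensional unit ball (the paper slices $B^n$ by the affine subspace $a+N$, you translate by $-b$; these are the same move) to get the crucial bound $\beta_k$ on the in-span component, and then point the new vertex antiparallel to the perpendicular component and conclude by Pythagoras. The only differences are notational (your $a$ and $b$ are the paper's $c-a$ and $a$, respectively).
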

\begin{proof}
Let $\{x_1,\dots,x_k\}$ $(1\le k\le n)$ be a standard equilateral set in $B^n$.  We show that there exists a vector $x_{k+1}\in B^n$ such that $\{x_1,\dots,x_k,x_{k+1}\}$ is a standard equilateral set.  The proof will then follow by induction.

Let $N:=\text{{\rm span}}\set{x_i-c(x_1,\dots,x_k)}{1\le i\le k}$ and set $a:=(I-P_N)c(x_1,\dots,x_k)$, where $P_N$ is the projection of $\mathds R^n$ into $N$ and $I$ is the identity.  The intersection of $B^n$ with the translation $a+N$  is a $(k-1)$-dimensional ball with centre $a$ and radius $\sqrt{1-\Vert a\Vert^2}$.  The set $\{x_1,\dots,x_k\}$ is a standard equilateral set in $(a+N)\cap B^n$ and thus, in view of Proposition \ref{betaj}, it follows that $\Vert c(x_1,\dots,x_k)-a\Vert\le \beta_k$.

Set $u:=-\alpha_{k+1}v$, where $v:=a/\Vert a\Vert$ if $a\ne 0$ and any unit vector in $N^\bot$ if $a=0$.  Then $\Vert a+u\Vert\le\Vert u\Vert=\alpha_{k+1}$ since $\alpha_{k+1}\ge\beta_k=\Vert c(x_1,\dots,x_k\}\Vert\ge\Vert a\Vert$.  
Put $x_{k+1}:=c(x_1,\dots,x_k)+u$.  The set $\{x_1,\dots,x_k,x_{k+1}\}$ is a standard equilateral set in $\mathds R^n$.  Moreover,
\begin{align*}
\Vert x_{k+1}\Vert^2&=\Vert c(x_1,\dots,x_k)+u\Vert^2\\
&=\bigl\Vert c(x_1,\dots,x_k)-a\bigr\Vert^2+\Vert a+u\Vert^2\\
&\le \beta_k^2+\alpha_{k+1}^2\\
&=1.
\end{align*}
\end{proof}

\section{Equilateral weights on $B^n$}

In this section we shall prove that the only admissible equilateral weights on the unit ball of $\mathds R^n$ are those that take a constant value.

For any  linear subspace $M$ of $\mathds R^n$, $a\in M$ and $r>0$, we denote the closed  ball in $M$ with centre $a$ and radius $r$ by $B^{M}(a,r)$, i.e. $B^M(a,r)=\set{x\in M}{\Vert x-a\Vert\le r}$.  We will also denote by $S^M(a,r)$ the sphere in $M$ with centre $a$ and radius $r$, i.e. $S^M(a,r)=\set{x\in M}{\Vert x-a\Vert= r}$.  We will write $B(a,r)$ (resp. $S(a,r)$) instead of $B^{\mathds R^n}(a,r)$ (resp. $S^{\mathds R^n}(a,r)$.  We will need the following definition.

\begin{definition}Let $a,b\in B^n$, $a\ne b$  and $N:=(b-a)^\bot$.  For any subspace $M\neq\{0\}$ of $\mathds R^n$ define
\[\gamma^M(a,b):=\sup\biggl\{r> 0\,:\,\frac{a+b}{2}+B^{M\cap N}(0,r)\subseteq B^n\biggr\}.\]
\end{definition}

Note that the set involved in the definition of $\gamma^M(a,b)$ is not empty  and bounded above by $1$.  Instead of $\gamma^{\mathds R^n}(a,b)$ we will simply write $\gamma(a,b)$.  It is easy to see that $\gamma^M(a,b)$ is in fact equal to the maximum of the set of its definition.  In addition, if $M_1$ and $M_2$ are subspaces of $\mathds R^n$ such that $M_1\subseteq M_2$, then $\gamma^{M_2}(a,b)\le\gamma^{M_1}(a,b)$.  The motivation behind this definition lies in the following observation.

\begin{lemma}\label{gamma1}
Let $a,b\in B^n$ such that $\Vert b-a\Vert=2\alpha_{n+1}$ and $\gamma(a,b)\ge \beta_n$.  Then $f(a)=f(b)$ for every equilateral weight $f$ on $B^n$.
\end{lemma}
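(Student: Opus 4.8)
The plan is to exploit the geometric picture behind the definition of $\gamma(a,b)$ together with the enlargement property (Proposition \ref{enlargement}) and a "rotation" argument analogous to the one used for $\mathds R^n$ in the introductory Example. Write $m:=\tfrac{a+b}{2}$ and $N:=(b-a)^\bot$. Since $\Vert b-a\Vert = 2\alpha_{n+1}$, the two points $a$ and $b$ are each at distance $\alpha_{n+1}$ from $m$ along the line $\mathds R(b-a)$. The idea is that a maximal standard equilateral set containing $a$ can be built by first choosing a standard equilateral set $\{x_1,\dots,x_{n}\}$ of size $n$ lying in the affine hyperplane $m+N$, centred at $m$, and then $a$ is exactly the "apex" point $c(x_1,\dots,x_n)+u$ with $u=-\alpha_{n+1}v$ for a suitable unit normal $v\in N^\bot$; replacing $u$ by $+\alpha_{n+1}v$ gives the apex $b$. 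So $\{x_1,\dots,x_n,a\}$ and $\{x_1,\dots,x_n,b\}$ are both maximal standard equilateral sets in $\mathds R^n$, hence (if they lie in $B^n$) applying the equilateral weight $f$ to each and subtracting gives $f(a)=f(b)$.

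The work is therefore to produce such a common "base" $\{x_1,\dots,x_n\}\subseteq m+N$ with all of $x_1,\dots,x_n$ lying in $B^n$. A standard equilateral set of size $n$ centred at $m$ inside $m+N$ has radius $\beta_n$, so it fits inside $B^n$ precisely when $m+B^{N}(0,\beta_n)\subseteq B^n$, i.e. when $m+S^{N}(0,\beta_n)\subseteq B^n$; and since $\gamma(a,b)=\gamma^{\mathds R^n}(a,b)$ is computed with $M=\mathds R^n$ so that $M\cap N=N$, the hypothesis $\gamma(a,b)\ge\beta_n$ says exactly that $m+B^{N}(0,\beta_n)\subseteq B^n$. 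Thus any standard equilateral set of size $n$ in the $(n-1)$-ball $(m+N)\cap B^n$ centred at $m$ is automatically contained in $B^n$. Such a set exists: $(m+N)\cap B^n$ is an $(n-1)$-dimensional ball of radius $\sqrt{1-\Vert m\Vert^2}\ge\beta_n$ (the inequality because $\gamma(a,b)\ge\beta_n$ forces the perpendicular distance from $m$ to $\partial B^n$ in the $N$ directions to be at least $\beta_n$, hence $\Vert m\Vert^2\le 1-\beta_n^2$), and inside an $(n-1)$-ball of radius at least $\beta_n$ one can inscribe a regular $n$-simplex of circumradius $\beta_n$ centred at the centre — indeed the vertices of a regular simplex spanning $N$ and centred at $m$ are at distance $\beta_n$ from $m$ by the very computation of $\beta_n$ in the introduction.

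Having fixed $\{x_1,\dots,x_n\}\subseteq B^n$ centred at $m$ and spanning $m+N$, I then invoke the computation from the proof of Proposition \ref{enlargement} (or the orthogonality relation recorded before it): the two points $c(x_1,\dots,x_n)\pm\alpha_{n+1}v$, where $v$ is a unit vector in $N^\bot$, each complete $\{x_1,\dots,x_n\}$ to a standard equilateral set of size $n+1$. Here $c(x_1,\dots,x_n)=m$ and $v=(b-a)/\Vert b-a\Vert$, so these two apex points are $m\pm\alpha_{n+1}v=b$ and $a$ respectively. Both $\{x_1,\dots,x_n,a\}$ and $\{x_1,\dots,x_n,b\}$ are maximal standard equilateral sets in $\mathds R^n$ (maximality since they have the extremal size $n+1$) with all members in $B^n$, so $f(x_1)+\dots+f(x_n)+f(a)=W=f(x_1)+\dots+f(x_n)+f(b)$, giving $f(a)=f(b)$.

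The main obstacle is the bookkeeping that makes $a$ and $b$ come out as the two apexes of a \emph{common} base simplex: one must check that the midpoint $m$ really is the centre of a size-$n$ equilateral set living in $m+N$ and that the perpendicular offsets $\pm\alpha_{n+1}v$ land exactly on $a$ and $b$. This is forced by $\Vert b-a\Vert=2\alpha_{n+1}$ together with $m=\tfrac{a+b}{2}$ and $v\parallel b-a$, but it must be spelled out. A secondary point to be careful about is the translation/rescaling: the displayed bound $\gamma(a,b)\ge\beta_n$ must be translated into the concrete statement $m+B^N(0,\beta_n)\subseteq B^n$, which in turn both supplies room for the base simplex and, via $\Vert m\Vert^2\le 1-\beta_n^2$, combined with $\beta_n^2+\alpha_{n+1}^2\le 1$ (in fact $\beta_n^2+\alpha_{n+1}^2=\tfrac{n-1}{2n}+\tfrac{n+1}{2n}=1$), guarantees $\Vert a\Vert,\Vert b\Vert\le 1$ — though the latter is already guaranteed by hypothesis since $a,b\in B^n$.
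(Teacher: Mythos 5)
Your proposal is correct and is essentially the paper's own proof: both take a standard equilateral set of size $n$ centred at $\tfrac{a+b}{2}$ with radius $\beta_n$ inside $\tfrac{a+b}{2}+N$ (contained in $B^n$ by the hypothesis $\gamma(a,b)\ge\beta_n$), observe via $\alpha_{n+1}^2+\beta_n^2=1$ that adjoining either $a$ or $b$ yields a maximal standard equilateral set in $B^n$, and subtract the two weight identities. The only cosmetic difference is that the paper verifies $\Vert x_i-a\Vert=\Vert x_i-b\Vert=1$ by a direct Pythagorean computation rather than by quoting the apex construction, and your detour through the radius of $(a+b)/2+N\cap B^n$ is unnecessary (and its centre is $(I-P_N)\tfrac{a+b}{2}$ rather than $\tfrac{a+b}{2}$), but none of this affects the argument.
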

\begin{proof}
Let $N:=(b-a)^\bot$ and let $\{x_1,\dots,x_n\}$ be a standard equilateral set in
\[\frac{a+b}{2}+S^N(0,\beta_n)\subseteq B^n.\]
Each $x_i$ can be written as $(a+b)/2+n_i$, where $n_i\in N$ and $\Vert n_i\Vert=\beta_n$.  Thus,
\[\Vert x_i-a\Vert^2=\biggl\Vert \frac{b-a}{2}+n_i\biggr\Vert^2=\alpha_{n+1}^2+\beta_n^2=1.\]
Similarly, $\Vert x_i-b\Vert =1$, i.e. $\{a,x_1,\dots,x_{n}\}$ and $\{b,x_1,\dots,x_{n}\}$ are maximal standard equilateral sets in $B^n$, and therefore
\[f(a)+\sum_{i=1}^n f(x_i)=f(b)+\sum_{i=1}^n f(x_i),\]
for every equilateral weight $f$ on $B^n$.
\end{proof}

\begin{lemma}\label{gamma2}
Let $a, b\in B^n$, $a\ne b$ and let $T$ be a two-dimensional subspace of $\mathds R^n$ containing $a$ and $b$.   Then $\gamma^T(a,b)=\gamma(a,b)$.
\end{lemma}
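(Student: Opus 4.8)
The plan is to show that, for each admissible radius $r$, the two containment conditions
\[
\tfrac{a+b}{2}+B^{T\cap N}(0,r)\subseteq B^n
\qquad\text{and}\qquad
\tfrac{a+b}{2}+B^{N}(0,r)\subseteq B^n
\]
are \emph{equivalent}, so that the suprema defining $\gamma^T(a,b)$ and $\gamma(a,b)$ coincide. Write $m:=\tfrac{a+b}{2}$ and $N:=(b-a)^\bot$, and let $P_N$, $P_{N^\bot}$ denote the orthogonal projections of $\mathds R^n$ onto $N$ and $N^\bot=\operatorname{span}(b-a)$. Since $T$ is a subspace containing $a$ and $b$, it contains both $m$ and $b-a$; as $b-a\ne 0$ and $b-a\notin N$, we have $T\not\subseteq N$, so $T\cap N$ is one-dimensional and $N^\bot\subseteq T$. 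In particular $P_{N^\bot}m$ is a scalar multiple of $b-a$ and hence lies in $T$, so $P_Nm=m-P_{N^\bot}m\in T\cap N$. This last fact is the crux of the argument.

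Next I would compute, for any subspace $L$ with $\operatorname{span}(P_Nm)\subseteq L\subseteq N$, the quantity $\sup\{\Vert m+w\Vert : w\in L,\ \Vert w\Vert\le r\}$. For $w\in N$ the decomposition $m+w=P_{N^\bot}m+(P_Nm+w)$ is orthogonal, so $\Vert m+w\Vert^2=\Vert P_{N^\bot}m\Vert^2+\Vert P_Nm+w\Vert^2$, and over $w\in L$ with $\Vert w\Vert\le r$ the right-hand side is maximised by taking $w$ of length $r$ along $P_Nm$ (any such $w$ if $P_Nm=0$), yielding the value $\Vert P_{N^\bot}m\Vert^2+(\Vert P_Nm\Vert+r)^2$. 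The point is that this maximiser lies on the line through $P_Nm$, which sits both in $N$ and, by the first paragraph, in $T\cap N$. Hence the supremum has the same value whether $L=N$ or $L=T\cap N$.

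Consequently each of the two displayed containments is equivalent to the single inequality $\Vert P_{N^\bot}m\Vert^2+(\Vert P_Nm\Vert+r)^2\le 1$, and taking the supremum over $r>0$ in both cases gives $\gamma^T(a,b)=\gamma(a,b)$; explicitly both equal $\sqrt{1-\Vert P_{N^\bot}m\Vert^2}-\Vert P_Nm\Vert$, which is positive because $\Vert m\Vert=\Vert P_{N^\bot}m\Vert^2+\Vert P_Nm\Vert^2<1$ (as $a\ne b$ and $\Vert a\Vert,\Vert b\Vert\le 1$). The only genuine content is the geometric observation that the ``worst'' perturbation of $m$ inside $N$ — the one along $P_Nm$ — already lies in $T$; the rest is the orthogonal splitting along $N^\bot\oplus N$. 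Alternatively, one could invoke the monotonicity remark ($T\subseteq\mathds R^n$ gives $\gamma(a,b)\le\gamma^T(a,b)$) for one inequality and use the above computation only for the reverse, but the equivalence of conditions handles both at once.
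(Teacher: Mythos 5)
Your proof is correct and follows essentially the same route as the paper's: the paper likewise reduces everything to the observation that the worst perturbation of $\tfrac{a+b}{2}$ inside $(b-a)^\bot$ points along the unit vector $u\in T\cap(b-a)^\bot$ with $\langle u,a+b\rangle\ge 0$ (i.e.\ along $P_N\bigl(\tfrac{a+b}{2}\bigr)$, which lies in $T$), bounding $\bigl\Vert\tfrac{a+b}{2}+x\bigr\Vert$ by $\bigl\Vert\tfrac{a+b}{2}+ru\bigr\Vert$ for every $x\in(b-a)^\bot$ with $\Vert x\Vert\le r$, and uses the stated monotonicity for the easy inequality. The only cosmetic differences are that you compute the supremum explicitly and thereby obtain the closed-form value of $\gamma(a,b)$ (a small bonus), and there is a harmless typo near the end where $\Vert m\Vert$ should read $\Vert m\Vert^2$.
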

\begin{proof}
We show that $\gamma (a,b)\ge\gamma^T(a,b)$.  Let $u$ be a unit vector in $T$ such that $\langle u,b-a\rangle=0$ and $\langle u,b+a\rangle\ge 0$.  Set $x_0:=(a+b)/2$.  Let $r>0$ such that $\Vert x_0+ru\Vert\le 1$ and let $x\in (b-a)^\bot$ such that $\Vert x\Vert\le r$.  Then $P_Tx=\lambda u$ where $|\lambda|\le \Vert x\Vert\le r$.  Hence
\begin{align*}
\Vert x_0+x\Vert^2=\ &\Vert x_0\Vert^2+\Vert x\Vert^2+2\langle x_0,x\rangle\\
\le\ &\Vert x_0\Vert^2+\Vert x\Vert^2+2|\langle P_T x_0, x\rangle|\\
=\ &\Vert x_0\Vert^2+\Vert x\Vert^2+2|\lambda|\langle x_0,u\rangle\\
\le\ &\Vert x_0\Vert^2+r^2+2r\langle x_0,u\rangle\\
=\ &\Vert x_0+r u\Vert^2\\
\le\ &1,
\end{align*}
and therefore $\gamma (a,b)\ge\gamma^T(a,b)$ as required.
\end{proof}

\begin{lemma}\label{shell}
Let $f$ be an equilateral weight on $B^n$.  There exists $0\le \lambda_n<1$ such that $f$ is constant in $\set{x\in B^n}{\Vert x\Vert\ge\lambda_n}$.
\end{lemma}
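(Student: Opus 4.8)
The plan is to repackage Lemmas~\ref{gamma1} and~\ref{gamma2} into a single operation and then analyse which points it links. Call an unordered pair $\{a,b\}\subseteq B^n$ a \emph{move} if $\Vert a-b\Vert=2\alpha_{n+1}$ and $\gamma(a,b)\ge\beta_n$; by Lemma~\ref{gamma2} the latter may be tested as $\gamma^T(a,b)\ge\beta_n$ in any $2$-plane $T$ through $a,b$, and by Lemma~\ref{gamma1} every move gives $f(a)=f(b)$. Writing $a\asymp b$ when $a,b$ are joined by a finite chain of moves, $f$ is constant on each $\asymp$-class, and $\asymp$ is $O(n)$-invariant. First I would compute $\gamma^T$ in the plane: if $\Vert a-b\Vert=2\alpha_{n+1}$ and $p,q$ are the components of $\tfrac12(a+b)$ parallel and orthogonal to $b-a$ inside $T:=\mathrm{span}\{a,b\}$, then $\gamma^T(a,b)=\sqrt{1-p^2}-|q|$, with $p=(\Vert b\Vert^2-\Vert a\Vert^2)/(4\alpha_{n+1})$ and $p^2+q^2=\Vert\tfrac12(a+b)\Vert^2$. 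Fixing $\Vert a\Vert=\rho$ and writing $b=a+2\alpha_{n+1}u$ with $u$ a unit vector, the requirements ``$\Vert b\Vert\le1$'' and ``$\gamma(a,b)\ge\beta_n$'' become, after a routine calculation, one upper bound on $\langle u,a\rangle$; hence the set $J(\rho)$ of norms $\Vert b\Vert$ produced by a move at a point of norm $\rho$ is a closed interval with left endpoint $\max\{2\alpha_{n+1}-\rho,0\}$ and a right endpoint $\overline{\jmath}(\rho)$ continuous on $(0,1]$, non-degenerate exactly for $\rho\in(2\alpha_{n+1}-1,1]$. By $O(n)$-invariance the relation ``$\rho\sim\rho'$ iff $\rho'\in J(\rho)$'' is symmetric; its transitive closure is an equivalence on $[0,1]$, and $f$ is constant on $\bigcup\{S(0,\rho'):\rho'\sim\rho\}$ once $f$ is known constant on each sphere there.

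Next I would show $f$ is constant on the annulus $\mathcal A:=\{x\in B^n:\alpha_{n+1}\le\Vert x\Vert\le\sqrt{2-2\beta_n}\}$ (here $\alpha_{n+1}<\sqrt{2-2\beta_n}<1$ for $n\ge3$, using $\alpha_{n+1}^2+\beta_n^2=1$). For $\rho$ in this range and $a\in S(0,\rho)$, \emph{every} $b\in S(0,\rho)$ with $\Vert a-b\Vert=2\alpha_{n+1}$ is a move, since for equal norms $\gamma^T(a,b)=1-\sqrt{\rho^2-\alpha_{n+1}^2}\ge\beta_n$ irrespective of direction; such $b$ are exactly the images of $a$ under a rotation through the fixed angle $\omega(\rho)\in(\tfrac\pi2,\pi]$, and for $n\ge3$ rotations of a fixed non-zero angle act transitively on $S^{n-1}$ (a classical fact, reducible to $n=3$), so $f$ is constant on $S(0,\rho)$. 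Since $\rho$ lies in the interior of $J(\rho)$ for $\rho\in(\alpha_{n+1},\sqrt{2-2\beta_n})$, neighbouring spheres of $\mathcal A$ are linked by moves and the two boundary spheres link inward; hence $f\equiv c_0$ on $\mathcal A$.

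The crux is to prove that every $a$ with $\alpha_{n+1}\le\Vert a\Vert\le1$ also has $f(a)=c_0$, which by $O(n)$-invariance means showing $[\alpha_{n+1},1]$ lies in the $\sim$-class $K$ of $\mathcal A$. I would use that $K$ is closed under $r(\rho):=2\alpha_{n+1}-\rho$ (the ``antipodal'' move of Lemma~\ref{gamma1}, valid for $\rho\in[2\alpha_{n+1}-1,1]$, giving $r(\rho)\in J(\rho)$) and closed under $J$. Put $\Psi:=\overline{\jmath}\circ r$. Then $\Psi(\rho)>\rho$ for all $\rho<1$ (the antipodal move from $r(\rho)$ gives back $\rho\in J(r(\rho))$, and $J(r(\rho))$ is a single point only when $r(\rho)\le2\alpha_{n+1}-1$, i.e. $\rho\ge1$), while $\overline{\jmath}(\rho)=1$ for $\rho$ near $2\alpha_{n+1}-1$. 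Starting from the symmetric interval $I_1:=\mathcal A\cup r(\mathcal A)=[\,2\alpha_{n+1}-\sqrt{2-2\beta_n},\ \sqrt{2-2\beta_n}\,]\subseteq K$ with $\rho_1:=\sqrt{2-2\beta_n}$, closure under $J$ (applied at $2\alpha_{n+1}-\rho_k\in I_k$) and under $r$ yields $I_{k+1}:=[\,2\alpha_{n+1}-\Psi(\rho_k),\ \Psi(\rho_k)\,]\subseteq K$; as $\Psi$ is continuous with $\Psi(\rho)>\rho$ for $\rho<1$, the increasing sequence $\rho_k$ tends to $1$, so $\bigcup_kI_k\supseteq(\alpha_{n+1},1)$, while $\alpha_{n+1}\in\mathcal A\subseteq K$ and $1\in K$ (reached from $r(\rho_k)\in K$ with $k$ large, where $\overline{\jmath}=1$). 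Thus $K\supseteq[\alpha_{n+1},1]$, so $f$ is constant on $\{x\in B^n:\Vert x\Vert\ge\alpha_{n+1}\}$ and $\lambda_n:=\alpha_{n+1}=\sqrt{(n+1)/2n}<1$ works. (For $n=2$ the same scheme applies and is simpler, since there $\sqrt{2-2\beta_2}=1$ and $\mathcal A$ already meets the unit sphere; one may take $\lambda_2=0$.)

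The main obstacle is the planar computation of $\gamma^T$ together with the uniform-in-$n$ verification of the elementary inequalities it feeds -- chiefly $\alpha_{n+1}<\sqrt{2-2\beta_n}<1$, the explicit form of $\overline{\jmath}$, the strict inequality $\Psi(\rho)>\rho$ for $\rho<1$, and $\overline{\jmath}(\rho)=1$ near $2\alpha_{n+1}-1$ -- which after clearing denominators reduce to transparent polynomial inequalities such as $(n+1)^2\ge0$. The only other nontrivial input is the classical transitivity of fixed-angle rotations on $S^{n-1}$, needed to pass from constancy of $f$ on an orbit to constancy on the whole sphere.
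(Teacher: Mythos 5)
Your strategy is genuinely different from the paper's: instead of the paper's planar construction (fixing a two-dimensional disc $D=B^n\cap T$, building the circuits $C_a\cup C_b\cup C_c\cup C_d$ of radius $2\alpha_{n+1}$ from a perpendicular quadruple on the boundary, verifying $\gamma^T(w,a)\ge\beta_n$ by explicit trigonometry, and overlapping rotated circuits to fill an annulus of $D$), you work with whole spheres $S(0,\rho)$, first proving constancy on each sphere via equal-norm moves and the transitivity of fixed-angle rotations on $S^{n-1}$, and then propagating between radii by an iteration. Several of your computations do check out: $\gamma(a,b)=1-\sqrt{\rho^2-\alpha_{n+1}^2}$ for equal norms, the admissible band $\alpha_{n+1}\le\rho\le\sqrt{2-2\beta_n}$, the unconditional validity of the antipodal move for $\rho\ge 2\alpha_{n+1}-1$, and $\overline{\jmath}\circ r$ having $1$ as its only fixed point. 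However, there is a genuine gap at $n=2$: rotations by a fixed angle do \emph{not} act transitively on $S^1$ (the orbit of a point is finite or merely dense), and indeed for $n=2$, $\rho=1$ the equal-norm moves from $a$ reach only the three points $a$, $R_{2\pi/3}a$, $R_{-2\pi/3}a$. So your first step, constancy of $f$ on $S(0,\rho)$, fails for $n=2$, and the parenthetical remark that ``the same scheme applies and is simpler'' is wrong. Since the lemma (and the main theorem) is asserted for all $n\ge 2$, this case needs a separate argument --- e.g.\ the paper's circuit construction, or composing two moves through different radii so as to realize a whole interval of rotation angles on the circle.

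A second, smaller gap: the claim that the constraints $\Vert b\Vert\le 1$ and $\gamma(a,b)\ge\beta_n$ reduce to ``one upper bound on $\langle u,a\rangle$'', hence that $J(\rho)$ is a closed interval, is asserted as routine but is load-bearing and not obvious. Writing $s:=\langle u,a\rangle$ one finds $\gamma(a,a+2\alpha_{n+1}u)=\sqrt{1-(\alpha_{n+1}+s)^2}-\sqrt{\rho^2-s^2}$, which is decreasing in $s$ only for $s\ge-\alpha_{n+1}$; on $[-\rho,-\alpha_{n+1}]$ (nonempty whenever $\rho>\alpha_{n+1}$) it is a difference of two increasing functions, so its level set $\{s:\gamma\ge\beta_n\}$ could a priori fail to be an interval. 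If $J(r(\rho_k))$ has gaps, the step $I_{k+1}=[\,2\alpha_{n+1}-\Psi(\rho_k),\Psi(\rho_k)\,]\subseteq K$ does not follow. This looks repairable with a careful monotonicity analysis, but it is exactly the kind of uniform-in-$n$ verification the paper sidesteps by checking a single explicit configuration in the plane.
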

\begin{proof}
It suffices to show that there exists $0\le\lambda_n<1$ such that $f$ is constant in $\set{x\in B^n\cap T}{\Vert x\Vert\ge\lambda_n}$ for every two-dimensional subspace $T$ of $\mathds R^n$.

Fix an arbitrary two-dimensional subspace $T$ and let $D$ denote the closed unit disc $B^n\cap T$.  To make calculations easier we fix a rectangular coordinate system in $D$ with origin $o$ at the centre of $D$ (see Figure 1.).  Consider the points $w(0,-1)$, $x(-1,0)$, $y(0,1)$ and $z(1,0)$.  Let $C_w$ (resp. $C_x$, $C_y$, $C_z$)  be the circular arc with centre $w$ (resp. $x$, $y$, $z$) and radius $2\alpha_{n+1}$.  The arcs $C_w$ and $C_x$ meet in $D$ at the point
$a$ the coordinates of which can be easily calculated:
\[a\biggl(\frac{-1+\sqrt{8\alpha_{n+1}^2-1}}{2},\frac{-1+\sqrt{8\alpha_{n+1}^2-1}}{2}\biggr).\]
Similarly, let $b,c,d\in D$ such that $C_x\cap C_y=\{b\}$,  $C_y\cap C_z=\{c\}$ and $C_z\cap C_w=\{d\}$.  Let $C_a$ (resp. $C_b$, $C_c$ and $C_d$) denote the circular arc in $D$ having centre $a$ and radius $2\alpha_{n+1}$ (see Figure 1.).

\begin{figure}
\begin{center}
\begin{pspicture}(-4.5,-4)(4.5,4)
\psset{yunit=30pt}
\psset{xunit=30pt}
\psset{runit=30pt}
\dotnode(0,0){o}
\dotnode(-3.98,0){x}
\dotnode(4,0){z}
\dotnode(0,-4){w}
\dotnode(0,4){y}
\uput[ul](o){$o$}
\uput[l](x){$x$}
\uput[r](z){$z$}
\uput[u](y){$y$}
\uput[d](w){$w$}
\pscircle[linewidth=1pt](o){4}
\psarc[linewidth=1pt](w){6.5}{54.5}{125.5}
\pnode(-1,1.7){Cw}
\uput[ul](Cw){$C_w$}
\psarc[linewidth=1pt](x){6.5}{-35.5}{35.5}
\pnode(1,3.5){Cx}
\uput[dr](Cx){$C_x$}
\pnode(-2.5,-3){D}
\uput[dl](D){$D$}
\dotnode(2.14,2.14){a}
\uput[ur](a){$a$}
\dotnode(-2.14,2.13){d}
\uput[u](d){$d$}
\dotnode(2.17,-2.14){b}
\uput[r](b){$b$}
\dotnode(-2.17,-2.14){c}
\uput[r](c){$c$}
\SpecialCoor
\dotnode(4;-30){g}
\uput[r](g){$g$}
\psarc[linewidth=0.2pt](g){4}{90}{210}
\dotnode(2.7,1.92){h}
\uput[ur](h){$h$}
\psline[linewidth=0.2pt](w)(h)
\dotnode(1.36,-1.04){l}
\uput[u](l){$l$}
\psarc[linewidth=1pt](a){6.5}{199.8}{250.8}
\pnode(-3.3,-1.6){Ca}
\uput[ur](Ca){$C_a$}
\psline[linewidth=0.2pt](w)(g)
\psline[linewidth=0.2pt](g)(h)
\psline[linewidth=0.2pt](l)(g)
\psline[linewidth=0.2pt](w)(a)
\pnode(0,-4.5){f1}
\uput[d](f1){Figure 1.}
\end{pspicture}
\vspace{40pt}
\end{center}
\end{figure}

First we show that $\gamma^T(a,w)\ge\beta_n$.  Let $g$ be the point $\bigl(\frac{\sqrt{3}}{2},-\frac{1}{2}\bigr)$.  Since $2\alpha_{n+1}\le \sqrt{3}$, it easy to see that the circular arc in $D$ having centre $g$ and radius $1$ intersects $C_w$, say at $h$.  Observe that if $l$ is the midpoint of the line segment $wh$, then $|lg|=\beta_n$.  So to show that $\gamma^T(w,a)\ge\beta_n$, it suffices to show that the angle $\widehat{owa}$  is less than or equal to the angle $\widehat{owh}$.  To this end, it is enough to show that $\sin \widehat{owa}\le\sin \widehat{owh}$.  Since $\widehat{doa}=\frac{\pi}{2}$, we have
\begin{align*}
\sin \widehat{owa}=&\sin(\pi/4-\widehat{oaw})\\
=&\frac{1}{\sqrt 2}\bigl(\cos\widehat{oaw}-\sin \widehat{oaw}\bigr).
\end{align*}
Applying the sine rule for triangle $oaw$ we deduce that
\[\sin\widehat{oaw}=\frac{\sin 3\pi/4}{2\alpha_{n+1}}=\frac{1}{2}\sqrt{\frac{n}{n+1}}\quad\text{ and }\quad\cos\widehat{oaw}=\frac{1}{2}\sqrt{\frac{3n+4}{n+1}}.\]
Thus,
\[\sin\widehat{owa}=\frac{1}{2\sqrt 2}\biggl(\sqrt{3+\frac{1}{n+1}}-\sqrt{1-\frac{1}{n+1}}\biggr).\]
On the other-hand
\begin{align*}
\sin\widehat{owh}=&\sin(\pi/3-\widehat{lwg})\\
=&\frac{1}{2}(\sqrt{3}\cos \widehat{lwg}-\sin\widehat{lwg})\\
=&\frac{1}{2}(\sqrt{3}\alpha_{n+1}-\beta_n)\\
=&\frac{1}{2\sqrt 2}\biggl(\sqrt{3+\frac{3}{n}}-\sqrt{1-\frac{1}{n}}\biggr).
\end{align*}
Thus, $\sin\widehat{owa}\le\sin\widehat{owh}$ and therefore $\gamma^T(w,a)\ge \beta_n$.

It is clear (see Figure 1.) that $\gamma^T(u,a)\ge\gamma^T(w,a)$ for every $u\in C_a$.  Thus, in view of Lemma \ref{gamma1} and Lemma \ref{gamma2}, it follows that $f$ is constant on $C_a$.   By symmetry, it follows that $f$ is constant on the circuit $C_a\cup C_b\cup C_c\cup C_d$.  If $\{w',x',y',z'\}$ is another quadruple of points on the circumference of $D$ such that $w'y'$ and $x'z'$ are perpendicular, then we can repeat the same as above to deduce that $f$ is constant on the corresponding circuit joining the points $w'$, $x'$, $y'$ and $z'$.  Moreover, since any two such circuits intersect, it follows that $f$ is constant in the annulus $\set{u\in D}{|ou|\ge 2\alpha_{n+1}-|oa|}$.  Let $\lambda_n:=2\alpha_{n+1}-|oa|$.  From the coordinates of $a$ one can calculate
\[\lambda_n=\frac{1}{\sqrt{2}}\biggl(1+\sqrt{4+\frac{4}{n}}-\sqrt{3+\frac{4}{n}}\,\biggr).\]
\end{proof}

For each $\rho\in [\beta_n,1]$ define $\eta_n(\rho):=\alpha_{n+1}-\sqrt{\rho^2-\beta_n^2}$.  Observe that the value $\eta_n(\rho)$ decreases strictly from $\alpha_{n+1}$ (when $\rho=\beta_n$) to $0$ (when $\rho=1$) and  $\eta_n(\rho)=\rho$ if, and only if, $\rho=\beta_{n+1}$.   Thus, $\eta_n(\rho)\ge \rho$ for every $\rho\in[\beta_n,\beta_{n+1}]$ and $\eta_n(\rho)<\rho$ when $\rho\in(\beta_{n+1},1]$. The geometric meaning of $\eta_n(\rho)$ becomes apparent from the following Lemma.

\begin{lemma}\label{geometric meaning}\begin{enumerate}[(a)]
\item Let $1\ge\rho\ge\beta_n$ and let $x\in B^n$ such that $\Vert x\Vert=\eta_n(\rho)$.  Then there exists a standard equilateral set $\{x_1,x_2,\dots,x_n\}$ such that $\Vert x_i\Vert=\rho$ and $\Vert x_i-x\Vert =1$ for every $i=1,2,\dots,n$.

\item Conversely, if $\{x_1,x_2,\dots,x_{n+1}\}$ is a maximal standard equilateral set  
in $B^n$ and $\Vert x_i\Vert=\rho$ for every $i=1,2,\dots,n$, then $\rho\ge \beta_n$ and if $\hull{x_1,\dots,x_{n+1}}$ contains $0$, then $\Vert x_{n+1}\Vert=\eta_n(\rho)$.
\end{enumerate}
\end{lemma}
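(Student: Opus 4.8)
The plan is to prove the two parts separately, both reducing to explicit computations about centres and radii of standard equilateral sets, which the paper has already set up via $\beta_k$ and $\alpha_{k+1}$.

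For part (a), I would start from a maximal standard equilateral set $\{y_1,\dots,y_{n+1}\}$ in $\mathds R^n$ whose centre is the origin; each $\Vert y_i\Vert=\beta_{n+1}$. The face $\{y_1,\dots,y_n\}$ has centre $c:=c(y_1,\dots,y_n)$ lying along the direction of $-y_{n+1}$, with $\Vert y_i - c\Vert = \beta_n$ and $\Vert c\Vert$ computable. The idea is to translate this $n$-point face along the line $\mathds R y_{n+1}$ (equivalently, slide the hyperplane $N:=y_{n+1}^\bot$-coset containing it) and then dilate the $n$ points radially within that hyperplane so that each has norm exactly $\rho$. Concretely: pick the affine hyperplane at signed distance $t$ from the origin along $y_{n+1}$; the $n$-point equilateral set of radius $\beta_n$ centred on the axis at that height has each vertex at distance $\sqrt{t^2+\beta_n^2}$ from $0$; setting this equal to $\rho$ forces $t = \pm\sqrt{\rho^2-\beta_n^2}$ (which is real precisely because $\rho\ge\beta_n$, giving the forward implication's hypothesis its role). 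Choosing the sign so the point $x$ — which must be the apex completing the simplex — lies on the correct side, the apex sits on the axis at the opposite extreme, at distance $\alpha_{n+1}$ from the face's centre by the perpendicular-height computation in the introduction; its norm works out to $\alpha_{n+1}-\sqrt{\rho^2-\beta_n^2}=\eta_n(\rho)$. One then checks $\{x_1,\dots,x_n\}$ (the dilated face) together with $x$ forms a standard equilateral set: the pairwise face distances are $1$ because radius $\beta_n$ is exactly the circumradius of a unit $(n-1)$-simplex, and $\Vert x_i-x\Vert=1$ because the perpendicular height is $\alpha_{n+1}$ and $\alpha_{n+1}^2+\beta_n^2=1$. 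Finally one must confirm all $n+1$ points lie in $B^n$: the face vertices have norm $\rho\le 1$ by hypothesis, and the apex has norm $\eta_n(\rho)\le\alpha_{n+1}\le 1$. Since $x$ was an arbitrary vector of norm $\eta_n(\rho)$, applying a rotation fixing the origin moves our constructed apex to $x$ and the conclusion follows.

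For part (b), suppose $\{x_1,\dots,x_{n+1}\}$ is a maximal standard equilateral set in $B^n$ with $\Vert x_i\Vert=\rho$ for $i=1,\dots,n$. The first claim, $\rho\ge\beta_n$, is immediate: $\{x_1,\dots,x_n\}$ is a standard equilateral set contained in the ball $B(0,\rho)$, and the smallest ball containing a unit $(n-1)$-simplex has radius $\beta_n$ (this is the circumradius identity from the introduction, and also follows from Proposition~\ref{alpha}/\ref{betaj} applied in the subspace they span). For the formula $\Vert x_{n+1}\Vert=\eta_n(\rho)$ under the hypothesis $0\in\hull{x_1,\dots,x_{n+1}}$: let $c$ be the centre of the face $\{x_1,\dots,x_n\}$. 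By the orthogonality remark in the introduction, $x_{n+1}-c$ is orthogonal to the hyperplane $N$ spanned by $\{x_i-c\}$, and $\Vert x_{n+1}-c\Vert=\alpha_{n+1}$. Writing $c=c_\parallel$ (it lies in $N$-coset through the face) and decomposing $x_{n+1}=c+\alpha_{n+1}\nu$ with $\nu$ a unit normal. The condition $0\in\hull{x_1,\dots,x_{n+1}}$ forces the origin to lie on the segment from $c$ (the "tip" of the face's contribution) towards $x_{n+1}$, i.e. $0$, $c$, $x_{n+1}$ are positioned so that $c$ and $x_{n+1}$ are on opposite sides of $0$ along $\nu$ (after noting $c$ is itself a positive multiple of $-\nu$ by the equilateral symmetry, since the barycentre of all $n+1$ points, which is $\frac{n}{n+1}c+\frac{1}{n+1}x_{n+1}$, the full simplex's centroid, lies along $\nu$ too). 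Then $\Vert x_{n+1}\Vert = \alpha_{n+1}-\Vert c\Vert$ and $\Vert c\Vert=\sqrt{\rho^2-\beta_n^2}$ from $\rho^2=\Vert c\Vert^2+\beta_n^2$ (Pythagoras, since $x_i - c\perp c$). This gives $\Vert x_{n+1}\Vert=\eta_n(\rho)$.

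The main obstacle I expect is part (b): making the phrase "if $\hull{x_1,\dots,x_{n+1}}$ contains $0$" do its work cleanly. One has to argue that containing $0$ in the convex hull, combined with all face vertices having equal norm $\rho$, pins down not just that $0$ lies on the axis through the simplex's centroid (easy, by symmetry of the face), but that $0$ lies \emph{between} the face and the apex in the right way — the subtle point being to rule out the configuration where $\Vert x_{n+1}\Vert=\alpha_{n+1}+\Vert c\Vert$ instead. The cleanest route is probably to write $0=\sum_i t_i x_i$ with $t_i\ge 0$, $\sum t_i=1$; by the equal-norm symmetry of the face and uniqueness one gets $t_1=\dots=t_n$, so $0=n t_1 c + t_{n+1} x_{n+1}=nt_1 c+t_{n+1}(c+\alpha_{n+1}\nu)$; projecting onto $N^\bot=\mathds R\nu$ and onto $N$ separately forces $c$ to be a \emph{negative} multiple of $\nu$ and the coefficients to balance, yielding $\Vert x_{n+1}\Vert=\alpha_{n+1}-\Vert c\Vert$ rather than the other sign. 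I would present this as a short linear-algebra computation rather than a geometric picture, since the picture in higher dimensions is exactly what one cannot draw.
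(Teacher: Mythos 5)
Your proposal is correct and follows essentially the same route as the paper: part (a) is the paper's construction $x_i:=u_i-\sqrt{\rho^2-\beta_n^2}\,x/\Vert x\Vert$ (with $\{u_1,\dots,u_n\}$ a maximal standard equilateral set in $x^\bot$ centred at $0$) viewed through a rotation, and part (b) rests on exactly the two facts the paper uses, namely that $0$ lies on the perpendicular axis through $c(x_1,\dots,x_n)$ because all face vertices have norm $\rho$ (whence $\Vert c(x_1,\dots,x_n)\Vert=\sqrt{\rho^2-\beta_n^2}$ by Pythagoras) and that the convex-hull hypothesis places $0$ on the segment joining $c(x_1,\dots,x_n)$ to $x_{n+1}$, so that $\alpha_{n+1}=\Vert x_{n+1}\Vert+\Vert c(x_1,\dots,x_n)\Vert$. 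Your barycentric-coordinate computation for the betweenness is a correct, slightly more explicit rendering of the paper's one-line assertion that $0=\lambda x_{n+1}+(1-\lambda)c(x_1,\dots,x_n)$ for some $\lambda\in[0,1]$.
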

\begin{proof} {\rm(a)}~First note that if $\rho=1$, then $0=\eta_n(\rho)=\Vert x\Vert$ and therefore the statement is true in this case.  Suppose that $\beta_n\le\rho<1$. Let $\{u_1,u_2,\dots,u_n\}$ be a maximal standard equilateral set in $x^\bot$ with centre $0$.  Then $\Vert u_i\Vert=\beta_n$.  It is easy to check that the vectors
\[x_i:=u_i-\sqrt{\rho^2-\beta_n^2}\,\frac{x}{\Vert x\Vert}\qquad\qquad (i=1,2,\dots,n)\]
 satisfy the required conditions.

 {\rm(b)}~The locus of points in $\mathds R^n$ equidistant from each of the $x_i$'s $(i=1,\dots,n)$ is the line passing through $0$ and parallel to $x_{n+1}-c(x_1,\dots,x_n)$.  The point on this line with shortest distance  from any (and therefore from each) of the $x_i$'s $(i=1,\dots,n)$ is that with position vector $c(x_1,\dots,x_n)$.  Thus
\[\beta_n=\bigl\Vert c(x_1,\dots,x_n)-x_i\bigr\Vert\le\Vert x_i\Vert=\rho\qquad(i=1,2,\dots,n).\]

If $0\in\hull{x_1,\dots,x_{n+1}}$, then $0=\lambda x_{n+1}+(1-\lambda)c(x_1,\dots,x_n)$ for some $\lambda \in[0,1]$.  Thus
\begin{align*}
\alpha_{n+1}=\bigl\Vert x_{n+1}-c(x_1,\dots, x_n)\bigr\Vert&=\Vert x_{n+1}\Vert+\bigl\Vert c(x_1,\dots, x_n)\bigr\Vert\\
&=\Vert x_{n+1}\Vert+\sqrt{\rho^2-\beta_n^2}.
\end{align*}
\end{proof}

\begin{lemma}\label{constant}Let $f$ be an equilateral weight on $B^n$ taking the constant value $\delta$ in $\set{x\in B^n}{\Vert x\Vert\ge \rho_0}$, where $\rho_0\in[\beta_n,1]$.  Then $f$ takes the constant value $W-n\delta$ in $ B(0,\eta_n(\rho_0))$ where $W$ is the weight of $f$.
If $\rho_0\le\beta_{n+1}$, then $f$ takes the constant value $\frac{W}{n+1}$ in $B^n$.
\end{lemma}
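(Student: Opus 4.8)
The plan is to feed the configurations produced by Lemma~\ref{geometric meaning}(a) into the defining identity of an equilateral weight, letting the radius $\rho$ run over the whole interval $[\rho_0,1]$ so as to reach every point of the ball $B(0,\eta_n(\rho_0))$ and not merely its bounding sphere. Concretely, to prove the first assertion I would fix $y\in B(0,\eta_n(\rho_0))$ and observe that on $[\rho_0,1]$ the map $\rho\mapsto\eta_n(\rho)$ is continuous and strictly decreasing, running from $\eta_n(\rho_0)$ down to $\eta_n(1)=0$; hence there is $\rho\in[\rho_0,1]$ with $\eta_n(\rho)=\Vert y\Vert$. Since $\rho\ge\rho_0\ge\beta_n$, Lemma~\ref{geometric meaning}(a) applied with $x=y$ yields a standard equilateral set $\{x_1,\dots,x_n\}$ with $\Vert x_i\Vert=\rho\le1$ and $\Vert x_i-y\Vert=1$ for every $i$. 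Then $\{y,x_1,\dots,x_n\}\subseteq B^n$ is a standard equilateral set of cardinality $n+1$ (the points are pairwise distinct because $\Vert y-x_i\Vert=1$), hence maximal, since the equilateral dimension of $B^n$ is $n+1$. Evaluating $f$ on this set and using $\Vert x_i\Vert=\rho\ge\rho_0$ to get $f(x_i)=\delta$, I obtain $f(y)+n\delta=W$, i.e.\ $f(y)=W-n\delta$; as $y$ was arbitrary this gives the first claim.

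For the second assertion, suppose $\rho_0\le\beta_{n+1}$. By the observation recorded just before Lemma~\ref{geometric meaning}, $\eta_n(\rho_0)\ge\rho_0$, so $B(0,\rho_0)\subseteq B(0,\eta_n(\rho_0))$; consequently the two closed sets $\set{x\in B^n}{\Vert x\Vert\ge\rho_0}$ and $B(0,\eta_n(\rho_0))$ cover $B^n$ and overlap along the sphere $S(0,\rho_0)$. On the first set $f$ is constantly $\delta$ and on the second it is constantly $W-n\delta$ by the first part, so comparing the two constants on $S(0,\rho_0)$ forces $\delta=W-n\delta$, i.e.\ $\delta=\tfrac{W}{n+1}$; hence $f\equiv\tfrac{W}{n+1}$ on all of $B^n$.

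I do not expect a real obstacle: everything rests on Lemma~\ref{geometric meaning}(a), on the already-known value $n+1$ of the equilateral dimension of $B^n$, and on the elementary monotonicity of $\eta_n$ spelled out in the text. The only points needing a moment's care are checking that the chosen $\rho$ meets the hypothesis $\beta_n\le\rho\le1$ of Lemma~\ref{geometric meaning}(a) --- immediate from $\rho_0\in[\beta_n,1]$ --- and that the constructed set $\{y,x_1,\dots,x_n\}$ genuinely has $n+1$ distinct elements, which follows from the distance relation $\Vert y-x_i\Vert=1$. If one wants to be thorough, one should also note that the case $\rho_0=1$ (where $\eta_n(\rho_0)=0$ and the ball degenerates to $\{0\}$) is covered by the same argument with the singleton interval $[\rho_0,1]=\{1\}$.
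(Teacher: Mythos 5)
Your argument is correct and coincides with the paper's own proof: both invoke the monotonicity of $\eta_n$ to find $\rho\in[\rho_0,1]$ with $\eta_n(\rho)=\Vert y\Vert$, apply Lemma~\ref{geometric meaning}(a) to build a maximal standard equilateral set $\{y,x_1,\dots,x_n\}$ with $\Vert x_i\Vert=\rho\ge\rho_0$, and deduce $f(y)=W-n\delta$; the second assertion follows in both cases from the nonempty overlap of the annulus with $B(0,\eta_n(\rho_0))$ when $\rho_0\le\beta_{n+1}$. No gaps.
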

\begin{proof}
Let $x\in  B(0,\eta_n(\rho_0))$.  The inequality $0\le \Vert x\Vert\le\eta_n(\rho_0)$ implies that there exists $1\ge\rho\ge\rho_0$ such that $\eta_n(\rho)=\Vert x\Vert$.  Thus, by Lemma \ref{geometric meaning}, there are vectors $\{x_1,x_2,\dots,x_n\}$ such that $\Vert x_i\Vert =\rho$ for $1\le i\le n$ and such that $\{x,x_1,x_2,\dots,x_n\}$ is a maximal standard equilateral set in $B^n$.  So, $f(x)+n\delta=W$.

If $\rho_0\le\beta_{n+1}$, then $\eta_n(\rho_0)\ge \rho_0$, i.e.
\[\set{x\in B^n}{\Vert x\Vert\ge \rho_0}\cap  B(0,\eta_n(\rho_0)) \ne\emptyset, \]
and thus $W-n\delta=\delta$.
\end{proof}

We are now ready to prove the result announced in the abstract.

\begin{theorem}Every equilateral weight on $B^n$ is constant.
\end{theorem}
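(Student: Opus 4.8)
The plan is an inductive descent from Lemma~\ref{shell}. That lemma gives $f$ constant, say equal to $\delta$, on a shell $\set{x\in B^n}{\|x\|\ge\lambda_n}$ with $\lambda_n<1$, and we may assume $\lambda_n\ge\beta_n$ (if $\lambda_n<\beta_n$ then $f$ is constant on $\set{x\in B^n}{\|x\|\ge\beta_n}$ and the last sentence of Lemma~\ref{constant} finishes at once). I would prove, by descent on $\rho$, the claim: \emph{if $f$ is constant (say $\equiv\delta$) on $\set{x\in B^n}{\|x\|\ge\rho}$ for some $\rho\in[\beta_n,1]$, then $f$ is constant on $B^n$.} If $\rho\le\beta_{n+1}$ this is the last sentence of Lemma~\ref{constant}, so suppose $\beta_n<\beta_{n+1}<\rho$. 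Then Lemma~\ref{constant} makes $f$ constant equal to $\delta':=W-n\delta$ on the ball $B(0,\eta_n(\rho))$, and the task reduces to the induction step: \emph{$f$ is constant equal to $\delta$ on the shell $\set{x\in B^n}{\|x\|\ge 1-\eta_n(\rho)}$.}

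Before carrying this out I would record that $1-\eta_n(\rho)<\rho$ for every $\rho\in[\beta_n,1)$: the map $t\mapsto\eta_n(t)-(1-t)$ has derivative $1-t/\sqrt{t^2-\beta_n^2}<0$ and vanishes at $t=1$, hence is strictly positive on $[\beta_n,1)$. Thus $\rho\mapsto\max\!\big(\beta_n,1-\eta_n(\rho)\big)$ sends each $\rho\in(\beta_{n+1},1)$ to a strictly smaller value, and its only fixed point in $[\beta_n,1]$ is $1$; so the strictly decreasing, bounded sequence it generates starting from $\lambda_n$ must reach a value $\le\beta_{n+1}$ — where the descent terminates via the first paragraph — after finitely many steps. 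Everything therefore hinges on the induction step.

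For the induction step, fix $y$ with $1-\eta_n(\rho)\le\|y\|=:r<\rho$; I would build a maximal standard equilateral set containing $y$ whose other $n$ members all lie in the regions $B(0,\eta_n(\rho))$ or $\set{x\in B^n}{\|x\|\ge\rho}$ where $f$ is already known. Taking $y=re_1$, let $x_n:=-\tfrac{1-r}{r}\,y$ (so $\|x_n\|=1-r\le\eta_n(\rho)$ and $\|x_n-y\|=1$) and $m:=\tfrac12(y+x_n)$, whence $\|y-m\|=\|x_n-m\|=\tfrac12$. Choose a standard equilateral set $\{w_1,\dots,w_{n-1}\}$ inside $e_1^{\perp}$ all of whose members have norm $\sqrt3/2$; this is possible because an $(n-1)$-point standard equilateral set has circumradius $\beta_{n-1}<\sqrt3/2$ and so can be recentred to put all its points at distance $\sqrt3/2$ from the origin. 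Then $\{y,\,x_1,\dots,x_{n-1},\,x_n\}$, with $x_i:=m+w_i$, is a maximal standard equilateral set lying in $B^n$, and $\|x_i\|^2=\|m\|^2+\tfrac34=r^2-r+1$. Granting that each $x_i$ ($i<n$) has norm $\ge\rho$, the weight equation yields $f(y)+(n-1)\delta+\delta'=W$, hence $f(y)=W-(n-1)\delta-(W-n\delta)=\delta$, which is the induction step.

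The only part needing real work is the inequality $\|x_i\|^2=r^2-r+1\ge\rho^2$ for all admissible $r$. If $\rho<\sqrt3/2$ it is automatic, since $r^2-r+1=(r-\tfrac12)^2+\tfrac34\ge\tfrac34>\rho^2$; and $\rho>\sqrt3/2$ can occur only at the first step $\rho=\lambda_n$ (one checks $\eta_n(\lambda_n)>1-\sqrt3/2$, so $1-\eta_n(\lambda_n)<\sqrt3/2$ already, and the descent then stays below $\sqrt3/2$). In that single case, using the identity $\beta_n^2+\alpha_{n+1}^2=1$ one finds $\eta_n(\rho)<\tfrac12$, so $r$ ranges over a subinterval of $(\tfrac12,\lambda_n)$ on which $r^2-r+1$ is increasing, and the inequality reduces to the one elementary estimate $\eta_n(\lambda_n)^2-\eta_n(\lambda_n)+1\ge\lambda_n^2$. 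Establishing this estimate — geometrically, that the $n-1$ ``outer'' vertices of the bridging simplex stay inside the shell where $f$ is already constant — is the main obstacle, and it is exactly here that the explicit value of $\lambda_n$ supplied by Lemma~\ref{shell} is used; the rest of the argument is bookkeeping of the lemmas already in hand.
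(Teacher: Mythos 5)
Your overall architecture --- iterating $\rho\mapsto 1-\eta_n(\rho)$ downward from $\lambda_n$ until one drops below $\beta_{n+1}$, and bridging each step with the antipodal pair $y$, $x_n=-\tfrac{1-r}{r}y$ completed to a maximal standard equilateral set whose remaining members have norm $\sqrt{r^2-r+1}$ --- is essentially the paper's proof (the paper phrases the descent as an infimum-plus-$\epsilon$ contradiction and invokes Proposition \ref{enlargement} instead of your explicit simplex in $e_1^{\perp}$, but the quantity $\Vert x_i\Vert^2=\tfrac34+(r-\tfrac12)^2$ is identical). The genuine gap is that you stop exactly at the step that carries all the content: the inequality $\eta_n(\rho)^2-\eta_n(\rho)+1\ge\rho^2$ is announced as ``the main obstacle'' and never proved, and your reduction of it to the single value $\rho=\lambda_n$ rests on a further unproved claim, $\eta_n(\lambda_n)>1-\sqrt3/2$ (``one checks''). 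As written, the argument does not close.

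Moreover, the route you sketch for closing it --- feeding in the explicit value of $\lambda_n$ from Lemma \ref{shell} --- is a detour. The inequality is a short algebraic identity valid for \emph{every} $\rho\in[\beta_n,1]$ with $\eta_n(\rho)\le\tfrac12$, with no reference to $\lambda_n$: writing $s:=\sqrt{\rho^2-\beta_n^2}$, so that $\eta_n(\rho)=\alpha_{n+1}-s$, and using $\alpha_{n+1}^2+\beta_n^2=1$, one finds
\[
\eta_n(\rho)^2-\eta_n(\rho)+1-\rho^2=(1-2\alpha_{n+1})\bigl(s-\alpha_{n+1}\bigr)\ge 0,
\]
because $\alpha_{n+1}\ge 1/\sqrt2>1/2$ and $s\le\sqrt{1-\beta_n^2}=\alpha_{n+1}$ make both factors nonpositive. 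The complementary case $\eta_n(\rho)>\tfrac12$ gives $\rho^2<\tfrac54-\alpha_{n+1}<\tfrac34\le r^2-r+1$ directly, which also renders your $\sqrt3/2$ case split and the claim about $\eta_n(\lambda_n)$ unnecessary. This is exactly how the paper finishes; with this identity inserted, your proof is complete and amounts to the paper's argument in a slightly different packaging.
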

\begin{proof}
Set $\mu_n(\rho):=1-\eta_n(\rho)$ and $\nu_n(\rho):=\rho-\mu_n(\rho)$ when $\rho\in[\beta_n,1]$.   Observe that $\mu_n$ is strictly increasing with range $[1-\alpha_{n+1}, 1]$.  It is easy to check that $\nu_n$ is strictly decreasing and that $\nu_n(1)=0$.  Thus, $\mu_n(\rho)<\rho$ for all $\rho\in[\beta_n,1)$.

Let $f$ be an equilateral weight on $B^n$.  In view of Lemma \ref{shell} we can define
\[ \theta:=\inf\set{\rho}{f \text{ is constant in } B^n\setminus B(0,\rho)}\]
and note that $\theta\le \lambda_n$.  In view of Lemma \ref{constant}, the proof would be complete if we could show that $\theta<\beta_{n+1}$.  So we suppose that $\theta\ge\beta_{n+1}$ and seek a contradiction.  Let $\epsilon$ be a positive real number  satisfying
\[\epsilon<\min\{\nu_n(\lambda_n),\beta_{n+1}-\beta_n\}.\]
Then $\theta-\epsilon>\beta_n>1-\alpha_{n+1}$ and thus $\mu_n^{-1}(\theta-\epsilon)$ is defined.  In addition, it follows that $\mu_n^{-1}(\theta-\epsilon)>\theta$, for if $\mu_n^{-1}(\theta-\epsilon)\le\theta$, then (since $\mu_n$ is strictly increasing)  we would have $\theta-\epsilon\le \mu_n(\theta)$ and this would lead to  $\epsilon\ge\nu_n(\theta)\ge\nu_n(\lambda_n)$, which contradicts our choice of $\epsilon$.

Fix $\rho_0:=\mu_n^{-1}(\theta-\epsilon)$.  Then, since $\mu_n^{-1}(\theta-\epsilon) >\theta$, $f$ takes a constant value, say $\delta$, in the annulus $\{x\in B^n:\Vert x\Vert\ge \rho_0\}$ and therefore, by virtue of Lemma \ref{constant},  $f$ takes the constant value $W-n\delta$ in  $ B(0,\eta_n(\rho_0))$, where $W$ is the weight of $f$.  We  show that $f$ then must take the constant value $\delta$ in the annulus  $\{x\in B^n:\Vert x\Vert\ge \mu(\rho_0)\}$.  This would contradict the definition of $\theta$ and thus conclude the proof.

To this end, fix and arbitrary vector $u\in B^n$ such that
\[1-\eta_n(\rho_0)=\mu_n(\rho_0)\le\Vert u\Vert\le\rho_0,\qquad\qquad(\star)\]
and let $v=-\frac{1-\Vert u\Vert}{\Vert u\Vert }u$.  Then $v\in B^n$ and $1=\Vert u-v\Vert=\Vert u\Vert+\Vert v\Vert$.  From the inequalities
\[1-\eta_n(\rho_0)+\Vert v\Vert\le \Vert u\Vert +\Vert v\Vert=1\le\rho_0+\Vert v\Vert\]
we obtain $1-\rho_0\le\Vert v\Vert\le \eta_n(\rho_0)$ and therefore, in virtue of Lemma \ref{constant}, we obtain $f(v)=W-n\delta$.  We can now apply Proposition \ref{enlargement} to obtain an enlargement $\{x_1,\dots,x_{n-1},u,v\}$ of $\{u,v\}$ to a maximal standard equilateral set in $B^n$.  Let $w:=(u+v)/2$. For each $i=1,2,\dots,n-1$ we have
\[\Vert x_i\Vert^2=\Vert x_i-w\Vert^2+\Vert w\Vert^2=\frac{3}{4}+\biggl|\Vert u\Vert-\frac{1}{2}\biggr|^2.\]
If $\eta_n(\rho_0)>\frac{1}{2}$, then $\rho_0^2<5/4-\alpha_{n+1}$ and thus
\[\Vert x_i\Vert^2\ge\frac{3}{4}>\frac{5}{4}-\frac{1}{\sqrt{2}}>\frac{5}{4}-\alpha_{n+1}>\rho_0^2 .\]
On the other-hand, if $\eta_n(\rho_0)\le\frac{1}{2}$, then $(\star)$ implies
\[\frac{1}{2}\le 1-\eta_n(\rho_0)\le\Vert u\Vert\]
and therefore
\begin{align*}
\Vert x_i\Vert^2&=\frac{3}{4}+\biggl |\Vert u\Vert-\frac{1}{2}\biggr |^2\\
&\ge\frac{3}{4}+\biggl(\frac{1}{2}-\eta_n(\rho_0)\biggr)^2\\
&=1-\eta_n(\rho_0)+\eta_n(\rho_0)^2\\
&=(1-2\alpha_{n+1})\biggl(\sqrt{\rho_0^2-\beta_n^2}-\alpha_{n+1}\biggr)+\rho_0^2\\
&\ge\rho_0^2.
\end{align*}
So in both cases we conclude that $f(x_i)=\delta$ for each $i=1,2,\dots,n-1$ and therefore
\begin{align*}
f(u)=&W-f(v)-\sum_{i=1}^{n-1}f(x_i)\\
=&W-(W-n\delta)-(n-1)\delta=\delta,
\end{align*}
as required.  This completes the proof.
\end{proof}

\begin{remark}
\begin{enumerate}[{\rm(i)}]
\item It follows immediately from the theorem proved here that an equilateral weight on a connected subset of $\mathds R^n$ that is the union of unit balls, is constant.
\item Our method of the proof should work also to show that an equilateral weight on an $n$-dimensional (closed) ball with radius greater than $\alpha_{n+1}$ is constant.  What is not completely clear to us is the case when the radius lies in the interval $(\beta_{n+1},\alpha_{n+1}]$.
\item Although we have defined equilateral weights as real-valued functions, it is apparent from the proof that the same conclusion can be drawn if one considers group-valued equilateral weights on the unit ball of $\mathds R^n$.  
\end{enumerate}
\end{remark}


\begin{thebibliography}{99}
\bibitem{AlPu}
N.~Alon and P.~Pudl\'ak, \emph{Equilateral sets in $\ell^n_p$}, Geom. Funct. Anal. \textbf{13} (2003),  467--482.

\bibitem{Bl}
L.~M.~Blumenthal, \emph{Theory and applications of distance geometry}, Clarendon Press, Oxford, 1953.

\bibitem{Dorofeev-Sherstnev}
S.V.~Dorofeev and A.N.~Sherstnev, \emph{Frame-type functions and
their applications}, Izv. vuzov matem. no. 4 (1990), 23--29 (in
Russian).


\bibitem{Dv}
A.~Dvure\v{c}enskij, \emph{Gleason's Theorem and Its
Applications}, Kluwer Acad. Publ., Dordrecht, Ister Science Press,
Bratislava, 1992.


\bibitem{Dv-Pu}
A.~Dvure\v{c}enskij and S.~Pulmannov\'a, \emph{New Trends in Quantum Structures}, Kluwer Acad.
Publ., Dordrecht, 2000.


\bibitem{Gl}
A.M.~Gleason, \emph{Measures on the closed subspaces of a Hilbert
space}, J. Math. Mech. \textbf{6} (1957), 885--893.

\bibitem{Gudderbook}
S.~P.~Gudder, \emph{Quantum Probability,} Academic Press Inc., Boston,
San Diego, New York, Berkeley, Tokyo, Toronto, 1988.

\bibitem{Ha}
J.~Hamhalter, \emph{Quantum Measure Theory}, Kluwer Acad. Publ.,
Dordrecht, 2003.

\bibitem{KoLaSc}
J.~Koolen, M.~Laurent, and A.~Schrijver, \emph{Equilateral dimension of the rectilinear space}, Des. Codes Cryptogr. \textbf{21} (2000), 149--164.

\bibitem{Pt-Pu}
P.~Pt\'ak and S.~Pulmannov\'a, \emph{Orthomodular Structures as
Quantum Logics}, Kluwer Acad. Publ., Dordrecht, 1991.

\bibitem{Pe}
C.~M.~Petty, \emph{Equilateral sets in Minkowski spaces}, Proc. Amer. Math. Soc. \textbf{29} (1971), 369-–374.

\bibitem{Varadarajan}
V.~S.~Varadarajan, \emph{Geometry of Quantum Theory},
Springer-Verlag, New York Inc., 1985.



\end{thebibliography}
\end{document}